\theoremstyle{plain}
\newtheorem{Thm}{Theorem}[section]
\newtheorem{Cor}[Thm]{Corollary}
\newtheorem{Lem}[Thm]{Lemma}
\newtheorem{Prop}[Thm]{Proposition}
\newtheorem{Prob}[Thm]{Problem}
\newtheorem*{thma}{Main Theorem}
\theoremstyle{definition}
\newtheorem{Def}[Thm]{Definition}
\theoremstyle{remark}
\newtheorem{Rem}[Thm]{Remark}
\begin{document}

\title[Holomorphic factorization of mappings into $\mbox{SL}_n(\mathbb{C})$]
{Holomorphic factorization of  \\ mappings into $\mbox{SL}_n(\mathbb{C})$}
\author{Bj\"orn Ivarsson \and Frank Kutzschebauch}
\address{Departement Mathematik\\
Universit\"at Bern\\
Sidlerstrasse 5, CH--3012 Bern, Switzerland}
\email{bjoern.ivarsson@math.unibe.ch}
\email{frank.kutzschebauch@math.unibe.ch}
\thanks{Ivarsson supported by the Wenner-Gren Foundations.
Kutzschebauch partially supported by Schweizerischer Nationalfonds Grant 200021-116165 }
\date{\today}
\bibliographystyle{amsalpha}
\begin{abstract} We solve \textsc{Gromov's} \textsc{Vaserstein} problem. Namely, we show that a null-homotopic holomorphic mapping from a finite dimensional reduced Stein space into $\mbox{SL}_n(\mathbb{C})$ can be factored into a finite product of unipotent matrices with holomorphic entries.
\end{abstract}
\maketitle

\tableofcontents

\section{Introduction}

It is standard material in a Linear Algebra course that the group $\mbox{SL}_m(\mathbb{C})$ is
generated by elementary matrices $E+ \alpha e_{ij} \ i\ne j$, i.e., matrices with 1's on the diagonal and all entries
outside the diagonal are zero, except one entry.  Equivalently every matrix $A \in \mbox{SL}_m(\mathbb{C})$
can be written as a finite product of upper and lower diagonal unipotent matrices (in interchanging order). The same question for matrices in $\mbox{SL}_m(R)$ where $R$ is a commutative ring 
instead of the field $\mathbb{C}$ is much more delicate. For example if $R$ is the ring of
complex valued functions (continuous, smooth, algebraic or holomorphic)  from a space $X$ the problem amounts to find for a given map $f : X \to \mbox{SL}_m(\mathbb{C})$ a factorization as a product of upper and lower diagonal unipotent matrices
\begin{equation*}f(x) = \left(\begin{matrix} 1 & 0 \cr G_1(x) & 1 \cr \end{matrix} \right)   
\left(\begin{matrix} 1 & G_2(x) \cr 0 & 1 \cr \end{matrix} \right)  \ldots \left(\begin{matrix} 1 & G_N(x)\cr 0 & 1 \cr \end{matrix} \right) 
\end{equation*}
where the $G_i$ are maps $G_i : X \to \mathbb{C}^{m(m-1)/2}$.

Since any product of (upper and lower diagonal) unipotent matrices is homotopic to a constant map (multiplying each entry outside the diagonals by $ t \in [0, 1]$ we get a homotopy to the identity matrix), one has to assume that the given map $f : X \to \mbox{SL}_m(\mathbb{C})$ is homotopic to a constant map or as we will say null-homotopic. In particular this assumption holds if the space $X$ is contractible.

This very general problem has been studied in the case of polynomials of $n$ variables. For $n=1$, i.e.,
$f : X \to \mbox{SL}_m(\mathbb{C})$ a polynomial map (the ring $R$ equals $\mathbb{C}[z]$) it is an easy consequence of the fact that $\mathbb{C}[z]$ is an Euclidean ring  that such $f$ factors through a product of upper and lower diagonal unipotent matrices. For $m=n=2$ the following
counterexample was found by \textsc{Cohn} \cite{CohnSGL2R}: the matrix
 $$\left(\begin{matrix} 1-z_1z_2 &  z_1^2\\ -z_2^2 & 1+z_1z_2 \end{matrix}\right)\in \mbox{SL}_2(\mathbb{C}[z_1,z_2]) $$ does not decompose as a finite product of unipotent matrices.

For $m\ge 3$ (and any $n$) it is a deep result of \textsc{Suslin} \cite{SuslinSSLGRP} that any matrix in $\mbox{SL}_m(\mathbb{C}[\mathbb{C}^n])$ decomposes as a finite product of unipotent (and equivalently elementary) matrices. 
More results in the algebraic setting can be found in \cite{SuslinSSLGRP}  and \cite{GrunewaldGSL2}.
For a connection to the Jacobian problem on $\mathbb{C}^2$ see \cite{WrightAFPS}.

In the case of continuous complex valued functions on a topological space $X$ the problem was
studied and partially solved  by \textsc{Thurston} and \textsc{Vaserstein} \cite{ThurstonVasersteinK1ES} and  then finally solved by \textsc{Vaserstein} \cite{VasersteinRMDPDFAO}, see Theorem \ref{Vaserstein}.

It is natural to consider the problem for rings of holomorphic functions on Stein spaces, in particular on
$\mathbb{C}^n$. Explicitly this problem was posed by \textsc{Gromov} in his groundbreaking paper \cite{GromovOPHSEB}  where he extends the classical \textsc{Oka-Grauert} theorem from bundles with homogeneous fibers to fibrations with elliptic fibers, e.g., fibrations admitting a dominating spray (for definition see \ref{definspray}).
In spite of the above mentioned result of \textsc{Vaserstein} he calls it the 

\medskip\noindent
{\bf {Vaserstein problem:}} (see \cite[sec 3.5.G]{GromovOPHSEB}) 

{\sl Does every holomorphic map $\mathbb{C}^n \to \mbox{SL}_m(\mathbb{C})$ decompose into a finite
product of holomorphic maps sending $\mathbb{C}^n$ into unipotent subgroups in $\mbox{SL}_m(\mathbb{C})$?}

\medskip
\textsc{Gromov's} interest in this question comes from the question about s-homotopies (s for spray). In this particular
example the spray  on $ \mbox{SL}_m(\mathbb{C})$ is that coming from the multiplication with
unipotent matrices. Of course one cannot use the upper and lower diagonal unipotent matrices only 
to get a spray (there is no submersivity at the zero section!), there need to be at least one more unipotent subgroup to be used in the multiplication. Therefore the factorization in a product of upper and lower diagonal matrices seems to be a stronger condition than to find a map into the iterated spray, but since all maximal unipotent subgroups in $\mbox{SL}_m(\mathbb{C})$ are conjugated and the upper and lower diagonal matrices generate $\mbox{SL}_m(\mathbb{C})$ these two problems are in fact equivalent. We refer the reader
for more information on the subject to \textsc{Gromov's} above mentioned paper.

The main result of this paper is a complete positive solution of \textsc{Gromov's} \textsc{Vaserstein} problem, namely we prove

\begin{thma}[see Theorem \ref{t:mainthmrestate}]\label{t:mainthm}
Let $X$ be a finite dimensional reduced Stein space and $f\colon X\to \mbox{SL}_m(\mathbb{C})$ be a holomorphic mapping that is null-homotopic. Then there exist a natural number $K$ and holomorphic mappings $G_1,\dots, G_{K}\colon X\to \mathbb{C}^{m(m-1)/2}$ such that $f$ can be written as a product of upper and lower diagonal unipotent matrices
\begin{equation*}f(x) = \left(\begin{matrix} 1 & 0 \cr G_1(x) & 1 \cr \end{matrix} \right)   
\left(\begin{matrix} 1 & G_2(x) \cr 0 & 1 \cr \end{matrix} \right)  \ldots \left(\begin{matrix} 1 & G_K(x)\cr 0 & 1 \cr \end{matrix} \right). 
\end{equation*}
\end{thma}

The method of proof is an application of the \textsc{Oka-Grauert-Gromov}-principle to certain stratified
fibrations. The existence of a topological section for these fibrations we deduce from \textsc{Vaserstein's}
result. 

We need the principle in it's strongest form suggested by \textsc{Gromov}, completely proven
by \textsc{Forstneri\v c} and \textsc{Prezelj} \cite{ForstnericEHSCS}, see Theorem \ref{t:forstneric} and also \textsc{Forstneri\v c} \cite[Theorem 8.3]{ForstnericOPSSFB}. After the \textsc{Gromov-Eliashberg} embedding theorem for Stein manifolds (see \cite{EliashbergESMAS}, \cite{SchurmannESSMD}) this is to our knowledge the second time this holomorphic h-principle has an application which goes beyond the classical results of \textsc{Grauert}, \textsc{Forster} and \textsc{Rammspott} \cite{GrauertAFHR}, \cite{GrauertHFWLG}, \cite{GrauertAHFWKR}, \cite{ForsterTMSR}, \cite{ForsterAEPSM},  \cite{ForsterHISA},  \cite{ForsterOPGG},  \cite{ForsterAME}.

The paper is organized as follows. In section \ref{overview}  we introduce the fibration and give an overview of the proof. In the next section we
explain how  the \textsc{Oka-Grauert-Gromov}-principle is used in the proof. In sections \ref{technical1} and \ref{technical2} we prove all technical details referred to in earlier sections. In the last section we comment on the number of matrices needed in the multiplication.

The results of the present paper have been announced in Comptes Rendus \cite{IK}, communicated by \textsc{Misha Gromov}.
The authors like to thank him for his interest in the subject.  Also the authors  thank \textsc{Franc Forstneri\v{c}}, \textsc{Josip Globevnik}, \textsc{Marco Slapar},  \textsc{Erik L\o w} and \textsc{Erlend Forn\ae ss Wold} for valuable discussions on the subject over the years. Especially we thank \textsc{Franc Forstneri\v{c}} for including an extra section into his paper \cite{ForstnericOPSSFB} to provide us with the exact version of h-principle we need. A special thank goes to \textsc{Wilberd van der Kallen} for bringing the work of \textsc{Vaserstein} to our attention.

\section{Statement of the result and overview of the proof}
\label{overview}
All complex spaces considered in this paper will be assumed reduced and we will not repeat this every time. We call a complex space $X$ finite dimensional if its smooth part $X\setminus X^{\text{sing}}$ has
finite dimension. Note that this does not imply that they have finite embedding dimension.

 We introduce the following notation. Let $n$ and $K$ be natural numbers. If $K$ is odd write $Z_K\in \mathbb{C}^{n(n-1)/2}$ as $$Z_K=(z_{21,K},\dots,z_{ji,K},\dots,z_{n(n-1),K})$$ for $1\le i < j \le n$ and for $K$ even  $$Z_K=(z_{12,K},\dots,z_{ji,K},\dots,z_{(n-1)n,K})$$ for $1\le j < i \le n$. Now define $M_k\colon \mathbb{C}^{n(n-1)/2}\to \mbox{SL}_n(\mathbb{C})$ as $$M_{2l}(Z_{2l})= \left(\begin{matrix} 1 & z_{12,2l} & \dots & z_{1n,2l} \\ 0 & \ddots & \ddots & \vdots  \\ \vdots & \ddots & \ddots & z_{(n-1)n,2l} \\ 0 & \dots & 0 & 1\end{matrix}\right)$$ and $$M_{2l-1}(Z_{2l-1})= \left(\begin{matrix} 1 & 0 & \dots & 0 \\ z_{21,2l-1} & \ddots & \ddots & \vdots  \\ \vdots & \ddots & \ddots & 0 \\ z_{n1,2l-1} & \dots & z_{n(n-1),2l-1} & 1\end{matrix}\right).$$

\begin{Rem}
In the proofs of our results we will study products $$M_1(Z_1)^{-1}\dots M_K(Z_K)^{-1}.$$ This is done for purely technical reasons. Using automorphisms of $\mathbb{C}^{n(n-1)/2}$ sending a unipotent matrix to it's inverse this is equivalent to a product $M_1(X_1)\dots M_K(X_K)$ in new coordinates $(X_1,\dots, X_K)$.
\end{Rem}

As pointed out in the introduction \textsc{Vaserstein} constructed the factorization in the case of continuous mappings, namely he proved

\begin{Thm}[see {\cite[Theorem 4]{VasersteinRMDPDFAO}}] \label{Vaserstein} 
For any natural number $n$ and an integer $d\ge 0$ there is a natural number $K$ such that for any
 finite dimensional normal topological space $X$  of dimension $d$ and a null-homotopic, that is homotopic to the identity, continuous mapping $f\colon X \to \mbox{SL}_n(\mathbb{C})$ it can be written as a finite product of no more than $K$ unipotent matrices.  That is, one can find continuous mappings $F_l\colon X\to \mathbb{C}^{n(n-1)/2}$, $1\le l \le K$  such that $f(x)=M_1(F_1(x))\dots M_K(F_K(x))$. 
 \end{Thm}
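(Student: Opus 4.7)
The strategy I would follow combines a pointwise factorization in $\mathrm{SL}_n(\mathbb{C})$ with K-theory of the continuous function ring $R=C(X,\mathbb{C})$, using that $X$ has finite topological dimension $d$. Giving a continuous $f\colon X\to\mathrm{SL}_n(\mathbb{C})$ amounts to giving an element of $\mathrm{SL}_n(R)$, and a factorization $f=M_1(F_1)\cdots M_K(F_K)$ is exactly a factorization in $\mathrm{SL}_n(R)$ as an alternating product of upper and lower elementary matrices; so the problem becomes one of bounded elementary generation in $\mathrm{SL}_n(R)$ subject to a $K_1$-type hypothesis.

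The first step is the pointwise statement: any $A\in\mathrm{SL}_n(\mathbb{C})$ is a product of a bounded number (depending only on $n$) of unipotents. I would use $LDU$ decomposition, absorb permutations into Weyl-group representatives (each itself a short product of unipotents), and write the diagonal factor $D$ with $\det D=1$ as a bounded product of commutators of elementary matrices, using the Steinberg-type relation $[E_{ij}(a),E_{jk}(b)]=E_{ik}(ab)$ together with the standard expression of $\mathrm{diag}(t,t^{-1},1,\dots,1)$ as a short elementary word. This already handles the case of constant or contractible $X$.

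For the continuous-parameter version I would invoke the topological (Bass) stable rank of $R$: for $X$ of covering dimension $d$ one has $\mathrm{tsr}(C(X,\mathbb{C}))\le\lfloor d/2\rfloor+1$, so any unimodular row in $R^m$ of length $m\ge\lfloor d/2\rfloor+2$ can be continuously reduced to $(1,0,\dots,0)$ by a bounded number of elementary column operations. Applying this first to the bottom row of $f$ and then inductively to principal submatrices cuts the problem down in uniformly many steps, with the number of elementary factors at each stage depending only on $n$ and $d$, not on $f$. The null-homotopy hypothesis enters at the end: the residual diagonal piece, viewed as an element of $K_1$, is trivial precisely because $f$ was null-homotopic, so it too is a uniformly bounded product of commutators of elementary matrices.

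The main obstacle will be making the elementary reductions continuous across the closed subvariety where pivots would vanish in a naive Gaussian-elimination scheme. The stable-rank bound is designed exactly for this: it guarantees that there are always enough ``directions'' in which to shift a unimodular row to render its first coordinate pointwise invertible, and that this shift is itself achievable by a uniformly bounded number of elementary operations. Counting how many operations are needed at each step of the induction on the matrix size and collecting the resulting elementary matrices into alternating upper/lower unipotent blocks then yields the uniform bound $K=K(n,d)$ asserted in the theorem.
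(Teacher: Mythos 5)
The theorem you are asked to prove is in fact \emph{not} proved in the paper at all: it is quoted verbatim from Vaserstein (\cite[Theorem~4]{VasersteinRMDPDFAO}) and used as a black box to provide the topological section needed for the Oka--Grauert--Gromov argument. So there is no ``paper's own proof'' to compare with; what follows is an assessment of your sketch on its own terms.

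Your proposal has a genuine gap at its central step. The topological stable rank bound $\mathrm{tsr}(C(X,\mathbb{C}))\le\lfloor d/2\rfloor+1$ lets you shorten a unimodular row of length $m$ only when $m\ge\lfloor d/2\rfloor+2$, and even then it reduces the row to a shorter unimodular row, not to $(1,0,\dots,0)$. But the bottom row of $f$ has length exactly $n$, and the theorem must hold with $n$ fixed and $d$ arbitrary (e.g.\ $n=2$, $d$ large). In that regime $n<\mathrm{tsr}(R)+1$ and the stable-rank reduction of the bottom row simply does not apply; there may be no elementary column operations at all that make the first entry a unit. The null-homotopy hypothesis cannot be deferred to a ``residual diagonal piece'' at the end: it has to enter precisely at this stage, to guarantee that the unimodular bottom row of $f$ is reducible despite the stable rank being too large. (Concretely, $\pi_{d}(\mathbb{C}^{n}\setminus\{0\})$ can be nonzero when $d\ge 2n-1$, and it is the null-homotopy of $f$ in $\mathrm{SL}_n$, not the dimension bound, that kills the obstruction.) Relatedly, even after one knows $f\in E_n(C(X,\mathbb{C}))$ one does not automatically get a \emph{bounded} elementary word length; that uniformity is the actual content of Vaserstein's theorem and requires a separate argument (Vaserstein exploits the null-homotopy $f_t$ itself, breaking $f=\prod_i f_{t_{i+1}}f_{t_i}^{-1}$ into factors close to the identity, each of which has a Gauss/$LDU$ decomposition of uniformly bounded elementary length, and controls the number of subdivisions by the covering dimension). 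Your pointwise $LDU$ step and the observation that the problem lives in $\mathrm{SL}_n(C(X,\mathbb{C}))$ are both fine, but as written the continuous reduction would fail already for $n=2$ over a high-dimensional $X$.
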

 

We recall the statement of the main result of this paper.

\begin{Thm}\label{t:mainthmrestate}
Let $X$ be a finite dimensional reduced Stein space and $f\colon X\to \mbox{SL}_n(\mathbb{C})$ be a holomorphic mapping that is null-homotopic. Then there exist a natural number $K$ and holomorphic mappings $G_1,\dots, G_{K}\colon X\to \mathbb{C}^{n(n-1)/2}$ such that $$f(x)=M_{1}(G_1(x))\dots M_{K}(G_{K}(x)).$$
\end{Thm}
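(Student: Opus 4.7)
The plan is to realize the desired factorization as a holomorphic section of an explicit fibration and then invoke the stratified Oka-Grauert-Gromov principle (Theorem \ref{t:forstneric}). For each natural number $K$, introduce the holomorphic multiplication map
\[
\Psi_K\colon \mathbb{C}^{K\cdot n(n-1)/2}\longrightarrow \mbox{SL}_n(\mathbb{C}), \qquad (Z_1,\dots,Z_K)\longmapsto M_1(Z_1)\cdots M_K(Z_K).
\]
A holomorphic factorization of $f$ of length $K$ is precisely a holomorphic section of the pullback $f^{\ast}\Psi_K\to X$. By Theorem \ref{Vaserstein}, one can choose $K$ depending on $n$ and $\dim X$ so that every null-homotopic continuous map $X\to \mbox{SL}_n(\mathbb{C})$ admits such a factorization; consequently the null-homotopy of $f$ supplies a continuous section of $f^{\ast}\Psi_K$.

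Next I would deform this continuous section to a holomorphic one. Since $\Psi_K$ is not submersive everywhere, $f^{\ast}\Psi_K$ is not a holomorphic fiber bundle but only a stratified fibration. The natural framework is Forstneri\v{c}'s stratified h-principle: one seeks a filtration $\mbox{SL}_n(\mathbb{C})=Y_0\supsetneq Y_1\supsetneq\cdots\supsetneq Y_r$ by closed complex analytic subvarieties such that over each difference $Y_i\setminus Y_{i+1}$ the map $\Psi_K$ is a holomorphic submersion whose fibers admit a dominating spray (i.e.\ are elliptic in \textsc{Gromov}'s sense). Theorem \ref{t:forstneric} would then upgrade the continuous section to a holomorphic one, stratum by stratum from the deepest outwards; composition with $\Psi_K$ yields the desired holomorphic factorization.

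The real work lies in producing such a stratification and verifying the spray condition on each stratum, and this is the principal obstacle. A natural first attempt is to stratify by nonvanishing of appropriate minors of partial products $M_1(Z_1)\cdots M_j(Z_j)$: on a stratum where the relevant block stays invertible, one can solve explicitly for successive factors and thereby present the fibers of $\Psi_K$ as an algebraic family parameterized by free intermediate entries. To exhibit dominating sprays I would look for infinitesimal motions of the tuple $(Z_1,\dots,Z_K)$ that preserve the total product, for example by inserting and removing pairs of commuting unipotents between consecutive factors or by reparameterizing adjacent pairs $(Z_j,Z_{j+1})$ along the stabilizer of their product. It is likely that $K$ must be enlarged beyond the bound coming from \textsc{Vaserstein} in order to accommodate these sprays, and that the stratification will be arranged to be invariant under the natural left-right action by unipotent subgroups so that the analysis reduces to a finite list of normal forms. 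With the stratified elliptic structure in place, a single appeal to the h-principle completes the proof of Theorem \ref{t:mainthmrestate}.
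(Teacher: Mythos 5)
Your outline correctly identifies the framework: view the factorization as a section of a pullback fibration, use Theorem \ref{Vaserstein} to get a continuous section, and then deform it to a holomorphic one via the stratified h-principle (Theorem \ref{t:forstneric}). However, the route you propose through that framework --- applying the h-principle directly to $f^{\ast}\Psi_K$ after stratifying $\mbox{SL}_n(\mathbb{C})$ --- is precisely the option the paper considers and rejects, because the fibers of $\Psi_K$ are too complicated to endow with sprays. The paper's essential technical move, which is absent from your sketch, is to compose $\Psi_K$ with the projection $\pi_n\colon \mbox{SL}_n(\mathbb{C})\to\mathbb{C}^n\setminus\{0\}$ onto the last row and instead apply the h-principle to $\Phi_K=\pi_n\circ\Psi_K$. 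One does not try to solve the whole factorization problem in one h-principle application; one only matches the last row holomorphically. Once $\Psi_K(G(x))f(x)^{-1}$ has bottom row $(0,\dots,0,1)$, its upper-left $(n-1)\times(n-1)$ block is a null-homotopic holomorphic map into $\mbox{SL}_{n-1}(\mathbb{C})$, and the theorem is finished by \emph{induction on the matrix size $n$}. Neither the projection $\pi_n$ nor the induction on $n$ appears in your proposal, and without them there is no control on the fibers.

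A second gap is the spray construction itself, which you leave as ``the real work'' and describe only heuristically (inserting commuting pairs, reparameterizing adjacent factors). The paper's Lemma \ref{l:spray} is not of that form. Its key observations are: (i) the components $P_{j,K}$ of $\Phi_K$ are polynomials that are \emph{no more than linear in each variable}; (ii) stratifying the target $\mathbb{C}^n\setminus\{0\}$ by $V_{n-k}=\{z_1=\dots=z_k=0\}$ (not the domain or $\mbox{SL}_n(\mathbb{C})$), one can over each stratum solve explicitly for enough coordinates to realize the fiber of $\Phi_K$ as a graph over the zero set of a \emph{single} such polynomial, times free variables; and (iii) for a polynomial $p$ that is at most linear in every variable, the Hamiltonian-type fields $V_{ij,p}=\tfrac{\partial p}{\partial x_i}\tfrac{\partial}{\partial x_j}-\tfrac{\partial p}{\partial x_j}\tfrac{\partial}{\partial x_i}$ are globally integrable and span the tangent space of the level set at its smooth points (Lemmata \ref{l:integrable} and \ref{l:span}). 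This explicit reduction-to-one-equation plus the $V_{ij,p}$ construction is what makes the stratified spray exist; your proposal gives no mechanism that would accomplish it. Finally, the exact locus $S_K$ of non-submersivity of $\Phi_K$ and the enlargement $K\mapsto K+3$ used to push the Vaserstein section off $S_K$ also need to be worked out (Lemmata \ref{l:submersive}, \ref{l:surjective}); you anticipate that $K$ must be enlarged but do not supply the argument.
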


We have the following corollary which in particular solves the \textsc{Gromov's Vaserstein} problem.

\begin{Cor}
Let $X$ be a finite dimensional reduced Stein space that is topologically contractible and $f\colon X\to \mbox{SL}_n(\mathbb{C})$ be a holomorphic mapping. Then there exist a natural number $K$ and holomorphic mappings $G_1,\dots, G_{K}\colon X\to \mathbb{C}^{n(n-1)/2}$ such that $$f(x)=M_{1}(G_1(x))\dots M_{K}(G_{K}(x)).$$
\end{Cor}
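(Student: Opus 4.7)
The plan is to observe that the corollary is an almost immediate consequence of Theorem \ref{t:mainthmrestate}: the only hypothesis of the Main Theorem not already assumed in the corollary is that $f$ be null-homotopic, and this is forced by the topological contractibility of $X$. Thus the entire argument reduces to producing a homotopy from $f$ to a constant map.

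First I would unpack the definition of contractibility. By assumption there exist a point $x_0 \in X$ and a continuous map $H\colon X \times [0,1]\to X$ such that $H(\cdot,0)=\mathrm{id}_X$ and $H(\cdot,1)\equiv x_0$. Next I would form the composition
\begin{equation*}
F\colon X \times [0,1]\to \mathrm{SL}_n(\mathbb{C}),\qquad F(x,t) = f(H(x,t)).
\end{equation*}
This is continuous as a composition of continuous maps, and it satisfies $F(\cdot,0)=f$ and $F(\cdot,1)\equiv f(x_0)$. Hence $f$ is homotopic to the constant map with value $f(x_0)$, that is, $f$ is null-homotopic in the sense used in the Main Theorem.

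Finally I would invoke Theorem \ref{t:mainthmrestate} applied to the finite dimensional reduced Stein space $X$ and the null-homotopic holomorphic map $f$. It furnishes a natural number $K$ and holomorphic maps $G_1,\dots,G_K\colon X\to \mathbb{C}^{n(n-1)/2}$ with $f(x)=M_1(G_1(x))\cdots M_K(G_K(x))$, which is exactly the factorization required by the corollary. There is no genuine obstacle here, since all the depth of the statement is concentrated in the Main Theorem; producing the homotopy from contractibility is routine.
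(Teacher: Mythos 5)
Your proof is correct and is exactly the (implicit) argument in the paper: contractibility of $X$ forces $f$ to be null-homotopic by composing a contraction of $X$ with $f$, after which Theorem \ref{t:mainthmrestate} applies verbatim. The paper omits the routine verification, which you have supplied correctly.
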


By the definition of the Whitehead $K_1$-group of a ring, see  \cite[page 61]{RosenbergAKTA}, this implies. 

\begin{Cor}
Let $X$ be a finite dimensional reduced Stein space that is topologically contractible and denote by ${\mathcal O}(X)$ the ring of holomorphic functions on $X$. Then $SK_1({\mathcal O}(X))$ is trivial and the determinant induces an isomorphism $\det \colon K_1({\mathcal O}(X))\to {\mathcal O}(X)^\star$.
\end{Cor}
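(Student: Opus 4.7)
The plan is to read off this corollary from the previous one via the definitions of $K_1$ and $SK_1$. Recall that for a ring $R$ one forms the stable groups $\mbox{GL}(R)=\varinjlim \mbox{GL}_n(R)$, $\mbox{SL}(R)=\varinjlim \mbox{SL}_n(R)$ and $E(R)=\varinjlim E_n(R)$, where $E_n(R)$ is generated by the elementary matrices $I+ae_{ij}$ ($i\neq j$), and sets $K_1(R)=\mbox{GL}(R)/E(R)$, $SK_1(R)=\mbox{SL}(R)/E(R)$. The determinant yields a short exact sequence
\[1\longrightarrow SK_1(R)\longrightarrow K_1(R)\xrightarrow{\det}R^{\star}\longrightarrow 1,\]
which is split by $R^{\star}=\mbox{GL}_1(R)\hookrightarrow \mbox{GL}(R)$. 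Hence it is enough to show that $SK_1(\mathcal{O}(X))$ is trivial; the determinant then automatically gives the claimed isomorphism $K_1(\mathcal{O}(X))\cong \mathcal{O}(X)^{\star}$.

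To prove $SK_1(\mathcal{O}(X))=1$, I would first observe that each triangular unipotent factor $M_k(Z_k)$ is itself a product of elementary matrices: for instance $M_{2l}(Z_{2l})$ can be written as $\prod_{i<j}(I+z_{ij,2l}e_{ij})$ in a suitable order, using the identity $(I+ae_{ij})(I+be_{kl})=I+ae_{ij}+be_{kl}$ whenever $j\neq k$ and $l\neq i$. Therefore $M_k(Z_k)\in E_n(\mathcal{O}(X))$ for every $k$. Since $X$ is topologically contractible, any holomorphic map $f\colon X\to \mbox{SL}_n(\mathbb{C})$ is null-homotopic, so by the previous corollary it is a finite product of such factors and hence belongs to $E_n(\mathcal{O}(X))$. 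Passing to the direct limit over $n$ gives $\mbox{SL}(\mathcal{O}(X))=E(\mathcal{O}(X))$, i.e.\ $SK_1(\mathcal{O}(X))=1$, as required.

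Essentially no obstacle remains: all of the analytic content has already been absorbed into the main theorem and its preceding corollary, and what is left is a purely algebraic translation into the language of Whitehead's $K_1$. The only minor point to verify is the elementary-matrix expansion of a triangular unipotent, which is immediate from the commutation identity cited above, together with the standard splitting of the determinant sequence.
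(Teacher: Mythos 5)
Your proof is correct, and it follows essentially the same route as the paper, which disposes of this corollary in a single sentence by citing the definitions of $K_1$ and $SK_1$ (Rosenberg, \emph{Algebraic $K$-theory and its applications}, p.~61) and leaving the routine verifications implicit. You have simply filled in those details — the split short exact sequence $1\to SK_1\to K_1\xrightarrow{\det}R^\star\to 1$, the fact that each unipotent triangular factor lies in $E_n(\mathcal{O}(X))$, and the passage to the direct limit — all of which are accurate.
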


The strategy for proving Theorem \ref{t:mainthmrestate} is as follows. Define $\Psi_K\colon (\mathbb{C}^{n(n-1)/2})^K\to \mbox{SL}_n(\mathbb{C})$ as $$\Psi_K(Z_1,\dots,Z_K)=M_1(Z_1)^{-1}\dots M_K(Z_K)^{-1}.$$ We want to show the existence of a holomorphic map $$G=(G_1,\dots, G_K)\colon X\to (\mathbb{C}^{n(n-1)/2})^K$$ such that  $$\xymatrix{ & (\mathbb{C}^{n(n-1)/2})^K \ar[d]^{\Psi_{K}} \\ X \ar[r]_{f} \ar[ur]^{G} & \mbox{SL}_n(\mathbb{C})}$$ is commutative. The result by \textsc{Vaserstein} shows the existence of a continuous map such that the diagram above is commutative. 

We will prove Theorem \ref{t:mainthmrestate} using the \textsc{Oka-Grauert-Gromov} principle for sections of holomorphic submersions over $X$. One candidate submersion would be to use the pull-back of $\Psi_K\colon (\mathbb{C}^{n(n-1)/2})^K\to \mbox{SL}_n(\mathbb{C})$. It turns out that $\Psi_K$ is not a submersion at all points in $(\mathbb{C}^{n(n-1)/2})^K$. It is a surjective holomorphic submersion if one removes a certain subset from $(\mathbb{C}^{n(n-1)/2})^K$. Unfortunately the fibers of this submersion are quite difficult to analyze and we therefore elect to study  $$\xymatrix{ & (\mathbb{C}^{n(n-1)/2})^K \ar[d]^{\pi_n\circ \Psi_{K}} \\ X \ar[r]_{\pi_n\circ f} \ar[ur]^{F} & \mathbb{C}^n\setminus \{ 0 \}}$$ where we define the projection $\pi_n\colon \mbox{SL}_n(\mathbb{C})\to \mathbb{C}^n\setminus \{ 0 \}$ to be the
projection of a matrix to its last row:
\begin{equation*}
\pi_n\left( \left(\begin{matrix} z_{11} & \dots & z_{1n} \\ \vdots & \ddots & \vdots \\ z_{n1} & \dots & z_{nn} \end{matrix}\right) \right)=(z_{n1},\dots , z_{nn}).
\end{equation*} 

However, even the map $\Phi_K=\pi_n\circ \Psi_K\colon (\mathbb{C}^{n(n-1)/2})^K\to \mathbb{C}^n\setminus \{0\}$ is not submersive everywhere. We have the following three results about that map which will be proved in later sections.

\begin{Lem}\label{l:submersive}
The mapping $\Phi_K=\pi_n\circ \Psi_K\colon (\mathbb{C}^{n(n-1)/2})^K\to \mathbb{C}^n\setminus \{0\}$ is a holomorphic submersion exactly at points $Z=(Z_1,\dots, Z_K)\in (\mathbb{C}^{n(n-1)/2})^K\setminus S_K$ where for $K\ge 2$ \begin{equation*} \begin{aligned}
S_K&=\left(\bigcap_{1\le 2j+1 < K}\left \{(Z_1,\dots, Z_K)\in (\mathbb{C}^{n(n-1)/2})^K: z_{n1,2j+1}=\dots=z_{n(n-1),2j+1}=0 \right \}\right) \cap \\ \cap& \left(\bigcap_{1\le 2j < K}\left \{(Z_1,\dots, Z_K)\in (\mathbb{C}^{n(n-1)/2})^K: z_{1n,2j}=\dots=z_{(n-1)n,2j}=0 \right \}\right),
\end{aligned}
\end{equation*}
that is the entries in the last row of each lower triangular matrix and the entries in the last column of each upper triangular matrix are 0, except for the $K$-th matrix where no conditions are imposed.
\end{Lem}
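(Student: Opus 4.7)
The plan is a direct computation of $d\Phi_K$ combined with an induction on $K$. Writing $\Phi_K(Z) = e_n^T M_1(Z_1)^{-1} \cdots M_K(Z_K)^{-1}$ as a row vector in $\mathbb{C}^n$, one has
\[
d_{Z_j}\Phi_K = \bigl(e_n^T M_1^{-1} \cdots M_{j-1}^{-1}\bigr)\, dM_j^{-1}\, \bigl(M_{j+1}^{-1} \cdots M_K^{-1}\bigr).
\]
Two elementary facts drive the whole argument. First, for any strict upper triangular matrix $T$ one has $e_n^T T = 0$, and for any strict lower triangular $T$ one has $e_n^T T \in \mathbb{C}^{n-1} \times \{0\}$. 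Second, $e_n^T M_j^{-1} = e_n^T$ whenever $j$ is even (because $M_j^{-1}$ is then upper triangular unipotent), or whenever $j$ is odd and $Z_j$ satisfies the $S_K$-condition at its index (because then $M_j$, and hence $M_j^{-1}$, is block diagonal of the shape $\mathrm{diag}(M_j', 1)$). Combined, these facts imply that whenever the $S_K$-conditions all hold on $Z_1, \dots, Z_{j-1}$, the left factor $e_n^T M_1^{-1} \cdots M_{j-1}^{-1}$ collapses to $e_n^T$.

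I induct on $K$, with base case $K=2$ by direct verification. Writing $\Phi_2 = \ell \cdot M_2^{-1}$ where $\ell$ is the last row of $M_1^{-1}$: if $Z_1$ satisfies the $S_2$-condition, then $\ell = e_n^T$ and both summands of $d\Phi_2$ lie in the hyperplane orthogonal to $M_2 e_n$; otherwise some $\ell_k \ne 0$ with $k < n$, and the rank-one matrix $T = e_k e_n^T$ inserted into $\ell \cdot dM_2^{-1}$ yields a vector pairing nontrivially with $M_2 e_n$, giving surjectivity. For $K \ge 3$, I factor $\Phi_K = \Phi_{K-1} \cdot M_K^{-1}$, so that $d\Phi_K = d\Phi_{K-1} \cdot M_K^{-1} + \Phi_{K-1} \cdot dM_K^{-1}$. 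If the $S_K$-conditions on $Z_1, \dots, Z_{K-2}$ do not all hold, then $(Z_1, \dots, Z_{K-1}) \notin S_{K-1}$; the inductive hypothesis yields surjectivity of $d\Phi_{K-1}$, which transfers to $d\Phi_K$ by invertibility of $M_K^{-1}$. Otherwise $\Phi_{K-1} = e_n^T M_{K-1}^{-1}$ by the collapse, and a short calculation using the two observations shows the image of $d\Phi_{K-1}$ equals either $\mathbb{C}^{n-1} \times \{0\}$ (when $K-1$ is odd) or the hyperplane orthogonal to $M_{K-1} e_n$ (when $K-1$ is even).

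The remaining dichotomy is whether $Z_{K-1}$ satisfies its $S_K$-condition. If it does (so the point lies in $S_K$), a parity-dependent but routine calculation shows that both $d\Phi_{K-1} \cdot M_K^{-1}$ and $\Phi_{K-1} \cdot dM_K^{-1}$ land inside a common proper hyperplane — orthogonal to $M_K e_n$ for $K$ even, and equal to $\mathbb{C}^{n-1} \times \{0\}$ for $K$ odd — so $\Phi_K$ fails to be submersive. If it does not, one produces an escaping direction just as in the base case: for $K$ even, via the test matrix $T = e_k e_n^T$ with $\ell_k \ne 0$ drawn from the last row of $M_{K-1}^{-1}$; for $K$ odd, by observing that $M_K M_{K-1} e_n$ is not proportional to $e_n$ precisely when $M_{K-1} e_n \ne e_n$. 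The principal bookkeeping obstacle is tracking the correct hyperplane through the parity cases; all the conceptual content is already present in the two observations at the beginning.
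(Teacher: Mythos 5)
Your proof is correct, and it follows a genuinely different route from the paper's. The paper works in coordinates: it names the $n$ components $P_{1,K},\dots,P_{n,K}$ of $\Phi_K$, derives the recursive relations $P_{k,K}=P_{k,K-1}-\sum z_{jk,K}P_{j,K}$, and studies the vanishing locus of the $n$-form $\Omega_K = dP_{1,K}\wedge\cdots\wedge dP_{n,K}$, carrying along a bundle of auxiliary inductive hypotheses ($I_K$ through $V_K$ in the text) that describe which lower-order wedges survive on $S_K$ and which $P_{j,K}$ vanish there. You instead stay in matrix language, apply the product rule to $\Phi_K=\Phi_{K-1}\cdot M_K^{-1}$, and exploit the invariance of $\mathbb{C}^{n-1}\times\{0\}$ under the right action of lower triangular unipotents (and of block-diagonal ones). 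The crucial simplification is your ``collapse'': on the locus where the $S_{K-1}$-conditions hold, every left partial product $e_n^T M_1^{-1}\cdots M_{j-1}^{-1}$ degenerates to $e_n^T$, and this reduces the computation of the image of $d\Phi_{K-1}$ at such points to a one-line right-multiplication argument, yielding $\mathbb{C}^{n-1}\times\{0\}$ or $(\mathbb{C}^{n-1}\times\{0\})M_{K-1}^{-1}$ according to parity. The paper's version encodes the same facts via $(II_K)$, $(III_K)$, $(IV_K)$, $(V_K)$ and equation~\eqref{inductiveassumption}, but in a less geometric form. What your argument buys is transparency: you do not need to reason about which summands of a wedge contain which $dz$ factors, and the ``exactly at $S_K$'' dichotomy falls out directly from whether $M_K M_{K-1}e_n$ is proportional to $e_n$. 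What the paper's explicit coordinate relations buy is that they are re-used verbatim in the proof of Lemma~\ref{l:spray}, whereas your approach would require a separate reduction there.

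Two small caveats. First, your ``two observations'' as stated do not quite cover everything the ``short calculation'' needs: for the intermediate even factors $M_i^{-1}$ with $i<K-1$ you also use that the $S_K$-condition makes $M_i$ block diagonal (so that right multiplication by $M_i^{-1}$ preserves $\mathbb{C}^{n-1}\times\{0\}$), which is the even-index analogue of your stated odd-index observation; this should be made explicit. Second, you tacitly use that the map $Z_j\mapsto e_n^T M_j(Z_j)^{-1}$ has derivative of rank $n-1$ (so that the $j=1$ contribution alone already fills $\mathbb{C}^{n-1}\times\{0\}$); the paper records this as the nonvanishing of $dP_{1,1}\wedge\cdots\wedge dP_{n-1,1}$, and you should state it too, e.g.\ by noting that $M\mapsto M^{-1}$ is an automorphism of the unipotent group. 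Neither caveat is a gap in the argument, only in the write-up.
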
 
\begin{Lem}\label{l:surjective}
The mapping $\Phi_K=\pi_n\circ \Psi_K\colon (\mathbb{C}^{n(n-1)/2})^K\setminus S_K\to \mathbb{C}^n\setminus \{0\}$ is surjective when $K\ge 3$.
\end{Lem}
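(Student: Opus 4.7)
The plan is to reduce to the case $K = 3$ and construct preimages by hand. Setting $Z_4 = \cdots = Z_K = 0$ makes the matrices $M_4, \ldots, M_K$ equal to the identity, so $\Phi_K(Z_1, Z_2, Z_3, 0, \ldots, 0) = \Phi_3(Z_1, Z_2, Z_3)$, and these zero coordinates automatically satisfy the conditions imposed on them in the definition of $S_K$. Consequently, surjectivity of $\Phi_K$ off $S_K$ for every $K \ge 3$ follows from surjectivity of $\Phi_3$ onto $\mathbb{C}^n \setminus \{0\}$ via preimages that already violate the $Z_1$-condition (which alone suffices to escape the intersection defining $S_K$). Since $Z_i \mapsto M_i(Z_i)^{-1}$ is a bijection of the respective triangular unipotent group, it is equivalent to show: for every $v \in \mathbb{C}^n \setminus \{0\}$ there exist a lower triangular unipotent $L_1$, an upper triangular unipotent $U$ and a lower triangular unipotent $L_2$ with $e_n^T L_1 U L_2 = v$.

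I would then view the last-row computation as the chain of right actions $w_1 := e_n^T L_1$, $w_2 := w_1 U$, $w_3 := w_2 L_2$. The key observation is that right multiplication by a lower triangular unipotent matrix fixes the last coordinate of a row, so $(w_3)_n = (w_2)_n$; hence the target value $v_n$ must already be produced in the $U$-step. Let $k := \max\{j : v_j \neq 0\}$. When $k < n$, I take the last row of $L_1$ to be $v_k e_k^T + e_n^T$ and choose $U$ with only nontrivial above-diagonal entry $U_{k,n} = -1/v_k$; a short computation gives $w_2 = v_k e_k^T$. When $k = n$ the same $w_1$ would have wrong last entry, so I instead take the last row of $L_1$ to be $e_1^T + e_n^T$ and $U_{1,n} = v_n - 1$, giving $w_2 = e_1^T + v_n e_n^T$. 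In both cases $(w_2)_k = v_k \neq 0$ and $(w_2)_j = 0$ for $j > k$, so taking $(L_2)_{k,j} := (v_j - (w_2)_j)/v_k$ for $j < k$ and all other below-diagonal entries of $L_2$ equal to zero yields $w_3 = v$.

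To verify that we land outside $S_K$, I translate back to $Z$-coordinates. Writing $L_1 = I + E$ with $E$ supported in the last row gives $E^2 = 0$, hence $M_1 = L_1^{-1} = I - E$; the last row of $M_1$ therefore differs from $e_n^T$ at the coordinate $i \in \{1,k\}$ used above, so the corresponding $z_{ni,1}$ is nonzero, violating the $Z_1$-condition inside $S_K$.

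The main subtlety is the case $v_n = 0$. Because $L_2$ preserves the last coordinate, one must achieve $(w_2)_n = 0$ already in the $U$-step while simultaneously preserving a nonzero pivot $(w_2)_k = v_k$ through which $L_2$ can reach the remaining coordinates of $v$. The placement of the two nonzero entries of the last row of $L_1$ in positions $k$ and $n$, with values $v_k$ and $1$, is designed precisely so that the cancellation $1 + v_k U_{k,n} = 0$ can be arranged in the $n$-th coordinate of $w_2$ without destroying the pivot at position $k$.
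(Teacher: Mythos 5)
Your proof is correct, and it takes essentially the same approach as the paper's: both reduce to the three-factor product $K=3$, compute the last row of a product lower$\times$upper$\times$lower of unipotent triangular matrices, and construct explicit preimages, splitting into cases according to which coordinates of $v$ vanish. Your formulation via $k=\max\{j: v_j\neq 0\}$ and the explicit pivot construction is a somewhat cleaner bookkeeping of the same idea, and you were also more careful than strictly necessary in tracking the $S_K$-escape, which is good.
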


\begin{Prop}\label{p:mainprop}
Let $X$ be a finite dimensional reduced Stein space and $f\colon X \to \mbox{SL}_n(\mathbb{C})$ be a null-homotopic holomorphic map. Assume that there exists a natural number $K$ and a continuous map $F\colon X\to (\mathbb{C}^{n(n-1)/2})^K\setminus S_K$ such that  $$\xymatrix{ & (\mathbb{C}^{n(n-1)/2})^K \setminus S_K\ar[d]^{\pi_n\circ \Psi_{K}} \\ X \ar[r]_{\pi_n\circ f} \ar[ur]^{F} & \mathbb{C}^n\setminus \{ 0 \}}$$ is commutative. Then there exists a holomorphic map $G\colon X\to (\mathbb{C}^{n(n-1)/2})^K\setminus S_K$, homotopic to $F$ via continuous maps $F_t\colon X\to (\mathbb{C}^{n(n-1)/2})^K\setminus S_K$, such that the diagram above is commutative for all $F_t$.
\end{Prop}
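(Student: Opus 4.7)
The plan is to reduce Proposition \ref{p:mainprop} to the parametric Oka-Grauert-Gromov h-principle for sections of stratified subelliptic submersions (Theorem \ref{t:forstneric}). First I would form the pullback fibration
\begin{equation*}
Y = \{(x,Z)\in X\times ((\mathbb{C}^{n(n-1)/2})^K\setminus S_K) : \pi_n(f(x))=\Phi_K(Z)\}
\end{equation*}
with the projection $p\colon Y\to X$ onto the first factor. Since $\Phi_K$ is a holomorphic submersion by Lemma \ref{l:submersive} and surjective by Lemma \ref{l:surjective}, $Y$ is a reduced complex space and $p$ is a surjective holomorphic submersion over the Stein base $X$. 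A holomorphic (resp.\ continuous) section of $p$ corresponds exactly to a holomorphic (resp.\ continuous) map $G\colon X\to (\mathbb{C}^{n(n-1)/2})^K\setminus S_K$ lifting $\pi_n\circ f$ through $\Phi_K$; thus $F$ yields a continuous section $\tilde F$ of $p$, and what we seek is a homotopic holomorphic section.

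To invoke Theorem \ref{t:forstneric} I would then verify that $p$ admits the stratified subelliptic structure it requires. This is inherited from a corresponding structure on $\Phi_K$ itself: stratify $\mathbb{C}^n\setminus\{0\}$ (and pull back the stratification to $X$) so that over each stratum the fibers of $\Phi_K$ carry a finite family of dominating holomorphic sprays. The sprays should come from the explicit unipotent block structure of the $M_j$: shift selected coordinates among $Z_1,\dots,Z_{K-1}$ and absorb the resulting change in the last row of the product by a compensating adjustment of $Z_K$. Removing $S_K$ is precisely what guarantees the submersivity needed for such compensation to be possible everywhere, and the stratification records where particular sub-families of these infinitesimal motions cease to span the tangent of the base.

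Once the stratified subellipticity of $\Phi_K$ (and hence of $p$) is established, Theorem \ref{t:forstneric} produces a homotopy $\tilde F_t$ of continuous sections of $p$ connecting $\tilde F$ to a holomorphic section $\tilde G$. Projecting each $\tilde F_t$ and $\tilde G$ to the second factor supplies the continuous homotopy $F_t$ and the holomorphic map $G$, all of them automatically lifts of $\pi_n\circ f$ since they are sections of $p$. I expect the main obstacle to be precisely the verification of the stratified spray structure on $\Phi_K$: it requires an explicit analysis, in terms of the unipotent entries, of where the compensating motions in $Z_K$ fail to be surjective onto the tangent space of the base, and the design of sprays adapted to each stratum --- precisely the combinatorial and linear-algebraic bookkeeping that the singular set $S_K$ was introduced to isolate, and which is presumably the content of the technical sections \ref{technical1} and \ref{technical2}.
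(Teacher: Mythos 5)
Your proposal follows the paper's strategy exactly: pull back $\Phi_K$ along $\pi_n\circ f$ to get a submersion over $X$, observe that $F$ gives a continuous section, verify stratified sprays (this is Lemma~\ref{l:spray}, proved in Section~\ref{technical2}), and invoke Theorem~\ref{t:forstneric}. One step you glide past is worth flagging: Theorem~\ref{t:forstneric} requires a stratification of the \emph{base} $X$ into regular strata, and the naive pullback $X_j=p^{-1}(V_j)$ of the stratification of $\mathbb{C}^n\setminus\{0\}$ need not have regular strata, since $X$ is allowed to be a singular Stein space and the preimages $p^{-1}(V_j)$ may themselves be singular. The paper repairs this by interleaving iterated singular loci, setting $X_{j,0}=X_j$ and $X_{j,i}=X_{j,i-1}^{\mathrm{sing}}\cup X_{j-1}$, and uses finite-dimensionality of $X$ to guarantee the resulting chain terminates; without some such refinement you cannot legitimately apply Theorem~\ref{t:forstneric}. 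Apart from this omission (and the stray use of ``subelliptic'' where the paper works with dominating sprays), your plan and the paper's proof coincide.
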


\begin{proof}[Proof of Theorem \ref{t:mainthmrestate}]
We use induction on $n$. Note that the result is obvious for $n=1$. Suppose that Proposition \ref{p:mainprop} is valid for $n$ and Theorem \ref{t:mainthmrestate} for $n-1$. Put $\Phi_K = \pi_n\circ \Psi_K$. We can find a continuous map $F\colon X \to (\mathbb{C}^{n(n-1)/2})^K\setminus S_K$ for some natural number $K$ such that $f(x)=\Psi_K(F(x))$. Indeed, since a finite dimensional Stein space is finite dimensional as a topological space
the \textsc{Vaserstein} result (Theorem \ref{Vaserstein}) gives us a map $(F_1,\dots, F_{K'})$ into $\left(\mathbb{C}^{n(n-1)/2}\right)^{K'}$. Abusing notation slightly one sees (use Lemma \ref{l:submersive})  that $F=(F_1,\dots, F_{K'}, (0,\dots,1),(0,\dots,0),(0,\dots,-1))$ gives a map from $X$ into $\left( \mathbb{C}^{n(n-1)/2}\right)^{K'+3}\setminus S_{K'+3}$ and putting $K=K'+3$ we have $f(x)=\Psi_K(F(x))$. It follows that $\Psi_K(F(x))f(x)^{-1}=E_n$. Using Proposition \ref{p:mainprop} we know that $F$ is homotopic to a holomorphic map $G$ such that $$\Phi_K(F(x))=\pi_n(f(x))=\Phi_K(G(x))$$ that is the last rows of the matrices $\Psi_K(F(x))$ and $\Psi_K(G(x))$ are equal. Therefore \begin{equation*}
\Psi_K(G(x))f(x)^{-1} = \left(\begin{matrix} \widetilde{f}^1_1(x) & \dots & \widetilde{f}^{n-1}_1(x) & h^n_1(x)    \\ \vdots & \ddots & \vdots & \vdots \\ \widetilde{f}^1_{n-1}(x) & \dots & \widetilde{f}^{n-1}_{n-1}(x) & h^n_{n-1}(x) \\ 0 & \dots & 0 & 1\end{matrix}\right)
\end{equation*}
where all entries are holomorphic. We see that \begin{equation*}
f_{n-1}(x)=  \left(\begin{matrix} \widetilde{f}^1_1(x) & \dots & \widetilde{f}^{n-1}_1(x) \\ \vdots & \ddots & \vdots \\ \widetilde{f}^1_{n-1}(x) & \dots & \widetilde{f}^{n-1}_{n-1}(x) \end{matrix}\right)
\end{equation*}
defines a holomorphic map $f_{n-1}\colon X \to \mbox{SL}_{n-1}(\mathbb{C})$. The homotopy $$\Psi_K(F_t(x))f(x)^{-1}$$ consists of matrices having last row equal to $(0,\dots,0,1)$ and therefore the $(n-1)\times(n-1)$ left upper corner of the matrices $\Psi_K(F_t(x))f(x)^{-1}$ are in $\mbox{SL}_{n-1}(\mathbb{C})$ for all $t$. Since for $t=0$ it is the identity matrix, the map $f_{n-1}\colon X \to \mbox{SL}_{n-1}(\mathbb{C})$ is null-homotopic. 

We use the induction hypotheses to write $f_{n-1}$ as a product of unipotent matrices with holomorphic entries. That is there exists $\widetilde{K}$, a holomorphic map $\widetilde{G}\colon X \to (\mathbb{C}^{(n-1)(n-2)/2})^{\widetilde{K}}\setminus S_{\widetilde{K}}$ such that \begin{equation*}
\widetilde{f}(x)=M_1(\widetilde{G}_1(x))\dots M_{\widetilde{K}}(\widetilde{G}_{\widetilde{K}}(x)).
\end{equation*}
Hence we have \begin{equation*}
\begin{aligned}
&\left(\begin{matrix} E_{n-1} & \begin{matrix} -h^n_1(x) \\ \vdots \\ -h^n_{n-1}(x) \end{matrix} \\ \begin{matrix} 0 & \dots & 0 \end{matrix} & 1 \end{matrix}\right)
\Psi_K(G(x))f(x)^{-1} = \left(\begin{matrix} \widetilde{f}(x) & \begin{matrix} 0 \\ \vdots \\ 0 \end{matrix} \\ \begin{matrix} 0 & \dots & 0 \end{matrix} & 1 \end{matrix}\right) = \\ 
& = \left(\begin{matrix} M_{1}(\widetilde{G}_{1}(x)) & \begin{matrix} 0 \\ \vdots \\ 0 \end{matrix} \\ \begin{matrix} 0 & \dots & 0 \end{matrix} & 1 \end{matrix}\right)
\dots 
 \left(\begin{matrix} M_{\widetilde{K}}(\widetilde{G}_{\widetilde{K}}(x)) & \begin{matrix} 0 \\ \vdots \\ 0 \end{matrix} \\ \begin{matrix} 0 & \dots & 0 \end{matrix} & 1 \end{matrix}\right) 
\end{aligned}
\end{equation*}
and the result follows by induction.
\end{proof}

In order to complete the proof of the theorem we need to establish Proposition \ref{p:mainprop}, Lemma \ref{l:submersive}, and \ref{l:surjective}.

\section{Stratified sprays}

We will introduce the concept of a spray associated with a holomorphic submersion following \cite{GromovOPHSEB} and \cite{ForstnericOPHSWS}. First we introduce some notation and terminology. Let $h\colon Z \to X$ be a holomorphic submersion of a complex manifold $Z$ onto a complex manifold $X$. For any $x\in X$ the fiber over $x$ of this submersion will be denoted by $Z_x$. At each point $z\in Z$ the tangent space $T_zZ$ contains {\it the vertical tangent space} $VT_zZ=\ker Dh$. For holomorphic vector bundles $p\colon E \to Z$ we denote the zero element in the fiber $E_z$ by $0_z$.

\begin{Def} \label{definspray}
Let $h\colon Z \to X$ be a holomorphic submersion of a complex manifold $Z$ onto a complex manifold $X$. A spray on $Z$ associated with $h$ is a triple $(E,p,s)$, where $p\colon E\to Z$ is a holomorphic vector bundle and $s\colon E\to Z$ is a holomorphic map such that for each $z\in Z$ we have 
\begin{itemize}
\item[(i)]{$s(E_z)\subset Z_{h(z)}$,}
\item[(ii)]{$s(0_z)=z$, and}
\item[(iii)]{the derivative $Ds(0_z)\colon T_{0_z}E\to T_zZ$ maps the subspace $E_z\subset T_{0_z}E$ surjectively onto the vertical tangent space $VT_zZ$.}
\end{itemize} 
\end{Def} 

\begin{Rem}
We will also say that the submersion admits a spray. A spray associated with a holomorphic submersion is sometimes called a (fiber) dominating spray.
\end{Rem}

One way of constructing dominating sprays, as pointed out by \textsc{Gromov}, is to find finitely many $\mathbb{C}$-complete vector fields that are tangent to the fibers and span the tangent space of the fibers at all points in $Z$. One can then use the flows $\varphi_j^t$ of these vector fields $V_j$ to define $s\colon Z\times \mathbb{C}^N\to Z$ via $s(z,t_1,\dots, t_N)=\varphi_1^{t_1}\circ \dots \circ \varphi_N^{t_N}(z)$ which gives a spray. 

\begin{Def}
Let $X$ and $Z$ be complex spaces. A holomorphic map $h\colon Z \to X$ is said to be a submersion if for each point $z_0\in Z$ it is locally equivalent via a fiber preserving biholomorphic map to a projection $p\colon U\times V \to U$, where $U\subset X$ is an open set containing $h(z_0)$ and $V$ is an open set in some $\mathbb{C}^d$.  
\end{Def}

We will need to use stratified sprays which are defined as follows.

\begin{Def} 
\label{definstratifiedspray}
We say that a submersion $h\colon Z\to X$ admits stratified sprays if there is a descending chain of closed complex subspaces $X=X_m\supset \cdots \supset X_0$ such that each stratum $Y_k = X_k\setminus X_{k-1}$ is regular and the restricted submersion $h\colon Z|_{Y_k}\to Y_k$ admits a spray over a small neighborhood of any point $x\in Y_k$.  
\end{Def}
\begin{Rem}
We say that the stratification $X=X_m\supset \cdots \supset X_0$ is associated with the stratified spray.
\end{Rem}

In \cite{ForstnericEHSCS}, see also \cite[Theorem 8.3]{ForstnericOPSSFB}, the following theorem is proved. 
\begin{Thm}\label{t:forstneric}
Let $X$ be a Stein space with a descending chain of closed complex subspaces $X=X_m\supset \cdots \supset X_0$ such that each stratum $Y_k = X_k\setminus X_{k-1}$ is regular. Assume that $h\colon Z \to X$ is a holomorphic submersion which admits stratified sprays then any continuous section $f_0\colon X\to Z$ such that $f_0|_{X_0}$ is holomorphic can be deformed to a holomorphic section $f_1\colon X\to Z$ by a homotopy that is fixed on $X_0$. 
\end{Thm}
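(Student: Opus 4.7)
The plan is to prove this by a double induction: an outer induction on the stratification $X_0 \subset X_1 \subset \cdots \subset X_m$, and an inner induction along a normal exhaustion of $X$ by $\mathcal{O}$-convex Stein compacts $K_1 \Subset K_2 \Subset \cdots$ with $\bigcup_j K_j = X$. The starting point is the already-holomorphic section $f_0|_{X_0}$; at each inductive stage, I want to produce a holomorphic section on a neighborhood of $X_k \cup K_j$ which is homotopic to the previous approximation relative to $X_0$, and then pass to the limit as both indices tend to infinity.

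The outer inductive step is the heart of the matter. Suppose I already have a continuous section $f$ which is holomorphic on some open neighborhood $U \supset X_{k-1}$ and agrees with $f_0$ on $X_0$. Since $Y_k = X_k \setminus X_{k-1}$ is a locally closed regular stratum over which $h \colon Z \to X$ admits (local) sprays, Gromov's h-principle for submersions with sprays yields, on a neighborhood of any Stein compact $L \subset Y_k$, an approximation of $f|_L$ by holomorphic sections homotopic to it. First I would push the existing holomorphic section on $U$ slightly ``inward'' (shrink $U$ to $U'$ with $\overline{U'} \subset U$) so that the closure of $X_k \setminus U'$ inside $Y_k$ is a closed subset on which I can work. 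Then I cover a compact piece of this closure by finitely many open sets on which the spray is defined, and use the spray maps $s\colon E \to Z$ to write any nearby continuous section as $z \mapsto s(\xi(z))$ for some continuous vertical section $\xi$ of the pulled-back vector bundle. Approximating $\xi$ by holomorphic sections (using Cartan's theorems A and B on the Stein stratum) and composing with $s$ produces local holomorphic sections approximating $f$.

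The crucial gluing step, which is the main obstacle, is to patch a local holomorphic modification defined near a point of $Y_k$ with the given holomorphic section on $U'$ into a single holomorphic section on a common neighborhood, without disturbing $f|_{X_0}$. This is done by a Cartan-style splitting argument: if $f_1,f_2$ are holomorphic sections on overlapping Stein domains $A,B$ that are sufficiently close on $A \cap B$, one writes $f_2 = s(\gamma)\circ f_1$ with a small holomorphic vertical section $\gamma$ over $A \cap B$, splits $\gamma = \gamma_A - \gamma_B$ holomorphically by a bounded linear Cartan split (this is where $\mathcal{O}$-convexity and coherence of the relevant bundle are essential), and replaces $f_1,f_2$ by $s(\gamma_A)\circ f_1, s(\gamma_B)\circ f_2$, which then agree on $A \cap B$. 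Iterating this over a finite cover of $X_k \cap K_{j+1}$ while controlling errors produces a single holomorphic section on a neighborhood of $X_k \cup K_j$ which $C^0$-approximates $f$ and is homotopic to $f$ relative to $X_0$.

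Once the outer induction has climbed up to $k = m$, I run the inner exhaustion induction, arranging for the $j$-th approximation to differ from the $(j-1)$-st by at most $2^{-j}$ on $K_{j-1}$ and to coincide with the $(j-1)$-st on $X_0$; the telescoping homotopies concatenate to a single homotopy from $f_0$ to a holomorphic limit section $f_1$, fixed on $X_0$. The main obstacle, as noted, is the stratified Cartan splitting with control: one has to split holomorphic vertical sections over intersections of Stein domains in such a way that the splitting extends coherently across the boundary $\partial Y_k \subset X_{k-1}$ where the spray degenerates. This is precisely the point addressed by Forstneri\v{c} and Prezelj, and I would invoke their parametric Oka principle machinery (the ``convex approximation property'' and its stratified refinement) to carry out the split uniformly near the lower strata.
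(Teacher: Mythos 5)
The paper does not prove Theorem~\ref{t:forstneric}: it is quoted as a black box from Forstneri\v{c}--Prezelj \cite{ForstnericEHSCS} and from \cite[Theorem 8.3]{ForstnericOPSSFB}, so there is no in-paper proof for your attempt to be measured against. What you have written is, instead, an attempted reconstruction of the Forstneri\v{c}--Prezelj argument, and at the level of architecture it is the right one: a double induction over the stratification and over a Stein exhaustion, local holomorphic modification on each stratum via the spray, and Cartan-type splitting of holomorphic vertical sections to glue local modifications into a global one while controlling $C^0$-errors so the process converges.

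However, as a proof your sketch has a genuine gap, one you yourself flag. At the decisive moment --- carrying the splitting and gluing uniformly up to the lower stratum $X_{k-1}\subset X_k$, where the spray on $Y_k$ degenerates and the inductively obtained holomorphic section on a neighborhood of $X_{k-1}$ must be matched --- you write that you would ``invoke their parametric Oka principle machinery \ldots\ and its stratified refinement.'' That machinery \emph{is} the content of the theorem, so invoking it is circular; you have described the scaffolding but not filled in the load-bearing step. A second, smaller gap: ``push the existing holomorphic section slightly inward'' conceals a genuine extension problem. Before any approximation on $Y_k$ can start, the holomorphic section defined on (a neighborhood of) the closed subvariety $X_{k-1}$ must be extended to a holomorphic section over an honest open neighborhood of $X_{k-1}$ in $X$; this uses the local product structure of the submersion $h$ (a Docquier--Grauert type tubular-neighborhood statement), and it is not a matter of merely shrinking an existing open set. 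Finally, the phrase ``Cartan's theorems A and B on the Stein stratum'' should be handled with care, since $Y_k=X_k\setminus X_{k-1}$ is merely open in the Stein space $X_k$ and need not itself be Stein; the approximation is carried out on $\mathcal{O}(X_k)$-convex compacts inside $Y_k$, which do admit Stein neighborhood bases. None of these points makes your outline wrong in spirit, but together they mean the proposal is a plausible roadmap to the Forstneri\v{c}--Prezelj proof rather than a proof.
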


\begin{Lem}\label{l:spray}
The holomorphic submersions $\Phi_K\colon (\mathbb{C}^{n(n-1)/2})^K\setminus S_K \to \mathbb{C}^n\setminus \{0\}$, for $K\ge 3$, admit stratified sprays.
\end{Lem}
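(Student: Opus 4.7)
The plan is to construct, over each stratum of a suitable descending chain of closed subvarieties of $\mathbb{C}^n\setminus\{0\}$, a spray for $\Phi_K$ by producing explicit complete holomorphic vector fields on the source $(\mathbb{C}^{n(n-1)/2})^K\setminus S_K$ that are tangent to the fibers of $\Phi_K$ and span the vertical tangent bundle.

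First I would exploit the factorization $\Phi_K=\pi_n\circ\Psi_K$ to pull back natural fiberwise vector fields of $\pi_n$. Since $\pi_n\colon \mbox{SL}_n(\mathbb{C})\to\mathbb{C}^n\setminus\{0\}$ is the orbit map of the left action of the stabilizer of $e_n=(0,\dots,0,1)$ (an affine group of type $\mbox{SL}_{n-1}(\mathbb{C})\ltimes\mathbb{C}^{n-1}$), its vertical tangent bundle is trivialized by left-invariant vector fields on this stabilizer, all of which are $\mathbb{C}$-complete. I would lift each such infinitesimal left multiplication to a vector field on $(\mathbb{C}^{n(n-1)/2})^K$ by realizing it as a modification of only the first one or two factors $Z_1,Z_2$: left multiplication by an element of the lower unipotent radical preserves the form of $M_1(Z_1)^{-1}$, while an $\mbox{SL}_{n-1}$-action can be absorbed by a paired modification of $Z_1,Z_2$. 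The resulting flows on $(Z_1,Z_2)$ are polynomial in the time parameter and hence complete.

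Second, to cover the remaining directions in the fiber, I would use "reassociation" vector fields. For each $j<K$ and each one-parameter family $t\mapsto U(t)$ of appropriate unipotent matrices, the identity $M_j(Z_j)^{-1}\cdot U(t)\cdot U(t)^{-1}\cdot M_{j+1}(Z_{j+1})^{-1}=M_j(Z_j)^{-1}M_{j+1}(Z_{j+1})^{-1}$ gives rise to a flow on $(Z_j,Z_{j+1})$ that leaves $\Psi_K$ itself invariant and is therefore tangent to the fibers of $\Phi_K$. Choosing $U(t)$ to be a unipotent subgroup of the \emph{opposite} triangular type to the neighboring $M_j,M_{j+1}$ forces the flow to be polynomial in $t$, hence complete. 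Varying $j$ and $U$ should produce enough complete tangent fields to span the vertical tangent bundle generically.

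The stratification of $\mathbb{C}^n\setminus\{0\}$ is then constructed inductively: the lowest stratum is the open locus where the vector fields produced above span, and the higher-codimension strata are the loci (cut out by vanishing of suitable minors of intermediate products) where the rank drops; on each such stratum one recovers enough additional fiber-tangent fields by further varying the factors that have become ``free.'' I expect the principal obstacle to be twofold: (a) verifying that the fields genuinely span the vertical tangent space of $\Phi_K$ (which is larger than $\ker D\Psi_K$) along each stratum, which amounts to a rank calculation for the differential of $\Psi_K$ modulo $VT\pi_n$; and (b) ensuring that each flow preserves the complement $(\mathbb{C}^{n(n-1)/2})^K\setminus S_K$, i.e., that $S_K$ is invariant under the flow, so that completeness on the full product space descends to completeness on the source.
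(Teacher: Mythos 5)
Your plan is genuinely different from the paper's, and as written it has gaps that you flag but do not resolve, which are in fact the heart of the matter.

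The paper's proof does not work with $\Psi_K$ or with the group structure of $\pi_n$ at all. Instead it makes an elementary but decisive algebraic observation: each coordinate $P_{k,K}$ of $\Phi_K$ is a polynomial that is \emph{no more than linear in every single variable} $z_{ij,l}$. For such a polynomial $p$ the ``Hamiltonian-like'' fields $V_{ij,p}=\tfrac{\partial p}{\partial x_i}\tfrac{\partial}{\partial x_j}-\tfrac{\partial p}{\partial x_j}\tfrac{\partial}{\partial x_i}$ are automatically $\mathbb{C}$-complete and span the tangent space of every smooth level set of $p$ (Lemmas~\ref{l:integrable} and~\ref{l:span}). The stratification of $\mathbb{C}^n\setminus\{0\}$ is chosen so that over the stratum $V_{n-k+1}\setminus V_{n-k}$ one can explicitly solve for a block of variables (using $a_1=\dots=a_{k-1}=0$, $a_k\neq0$) and, going back one step via~(\ref{e:oddcase}), reduce the $n$ fiber equations to a \emph{single} equation $P_{k,K-1}(X)=a_k$ in the remaining variables, times a factor of free variables. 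The completeness problem is thereby reduced to the one-equation case, where it is automatic.

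Your proposal, by contrast, tries to produce the vertical fields directly from the group structure: left action of the stabilizer of $e_n$ and ``reassociation'' fields between adjacent factors. Two concrete difficulties arise. First, in the reassociation move $M_j(Z_j)^{-1}U(t)\cdot U(t)^{-1}M_{j+1}(Z_{j+1})^{-1}$, the adjacent factors $M_j$, $M_{j+1}$ are of \emph{opposite} triangular type; for $M_j(Z_j)^{-1}U(t)$ to again have the form $M_j(Z_j')^{-1}$ you need $U(t)$ to have the \emph{same} type as $M_j$, but then $U(t)^{-1}M_{j+1}(Z_{j+1})^{-1}$ mixes lower and upper and is not of the form $M_{j+1}(Z_{j+1}')^{-1}$. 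Your suggested remedy (take $U(t)$ of ``opposite'' type) creates the same problem on the $j$-side. Second, absorbing the $\mbox{SL}_{n-1}$-action into a paired modification of $(Z_1,Z_2)$ amounts to re-doing an LU-type factorization of $gM_1(Z_1)^{-1}M_2(Z_2)^{-1}$; this is only generically possible and the resulting dependence on $(t,Z_1,Z_2)$ is rational, not polynomial, so the flows are not globally defined — completeness fails precisely where you need it. You explicitly name the two residual issues, (a) spanning all of $\ker D\Phi_K$ rather than just $\ker D\Psi_K$, and (b) invariance of $S_K$, but these are not side conditions to ``verify'' afterwards: resolving (a) while keeping completeness is the crux, and the paper's route around it is the reduction to a single at-most-linear polynomial, not an analysis of $VT\pi_n$.
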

This lemma will be established in Section \ref{technical2}. Assuming it true for the moment we prove Proposition \ref{p:mainprop}.
\begin{proof}[Proof of Proposition \ref{p:mainprop}]
Assume that $K$ is a natural number so that there exists a continuous map $F\colon X \to (\mathbb{C}^{n(n-1)/2})^K\setminus S_K$ such that  
$$\xymatrix{ & (\mathbb{C}^{n(n-1)/2})^K \setminus S_K\ar[d]^{\Phi_{K}} \\ X \ar[r]_{\pi_n\circ f} \ar[ur]^{F} & \mathbb{C}^n\setminus \{ 0 \}}$$
is commutative. Put $Y= (\mathbb{C}^{n(n-1)/2})^K\setminus S_K$ and $p=\pi_n\circ f$. Define the pull-back of $\left( Y, \Phi_K, \mathbb{C}^n\setminus \{0\} \right)$ via $p \colon X\to \mathbb{C}^n\setminus \{0\}$ as $(p^{\star}Y , p^{\star}\Phi_K, X)$ where $$ p^{\star}Y=\left \{(x,Z)\in X\times Y; p(x)= \Phi_K (Z) \right \}  $$ and $p^{\star}\Phi_K(x,Z)=x$. Using that $\Phi_K$ is a holomorphic submersion we see that $p^{\star} \Phi_K$ is a holomorphic submersion. The continuous mapping $F$ defines a continuous section $$p^{\star}F(x)=(x,F(x))$$ of $(p^{\star}Y , p^{\star}\Phi_K, X)$. We need to show that $(p^{\star}Y , p^{\star}\Phi_K, X)$ admits stratified sprays. Let $\mathbb{C}^n\setminus \{ 0 \} = V_m \supset \cdots \supset V_0$ be the stratification of $\mathbb{C}^n\setminus \{ 0 \}$ corresponding to the stratified spray of $(Y, \Phi_K, \mathbb{C}^n\setminus \{ 0 \})$. Define $X_j=p^{-1}(V_j)$ for $0\le j \le m$. These complex subspaces need to be stratified in order for us to apply Theorem \ref{t:forstneric}. Define $X_{0,i}=X_{0,i-1}^{\text{sing}}$ for $i\ge 1$ and $X_{0,0}=X_0$. This defines a stratification of $X_0$ since $X_{0,J}=\emptyset$ when $J > L$ for some $L$ since the singularity set of reduced complex spaces has strictly lower dimension than the space itself. We continue by putting $X_{j,0}=X_j$ and $X_{j,i}=X_{j,i-1}^{\text{sing}}\cup X_{j-1}$ for $i\ge 1$. Since X is finite dimensional this gives a stratification of $X$. Now the result follows by Theorem \ref{t:forstneric}. 
\end{proof}

\section{Proof of Lemma \ref{l:surjective} and \ref{l:submersive}}
\label{technical1}

Recall that \begin{equation*} \begin{aligned}
S_K&=\left(\bigcap_{1\le 2j+1 < K}\left \{(Z_1,\dots, Z_K)\in (\mathbb{C}^{n(n-1)/2})^K: z_{n1,2j+1}=\dots=z_{n(n-1),2j+1}=0 \right \}\right) \cap \\ \cap& \left(\bigcap_{1\le 2j < K}\left \{(Z_1,\dots, Z_K)\in (\mathbb{C}^{n(n-1)/2})^K: z_{1n,2j}=\dots=z_{(n-1)n,2j}=0 \right \}\right).
\end{aligned}
\end{equation*}
We begin by proving Lemma \ref{l:surjective}.
\begin{proof}[Proof of Lemma \ref{l:surjective}]
First note that the set $S_K$ is invariant under the automorphism in $(\mathbb{C}^{n(n-1)/2})^K$ replacing $M_1(Z_1)^{-1}\dots M_K(Z_K)^{-1}$ with $M_1(X_1)\dots M_K(X_K)$. Also note that if $$\pi_n\left(M_1(X_1)\dots M_K(X_K)\right)$$ is surjective then $$\pi_n\left(M_1(X_1)\dots M_{K+J}(X_{K+J})\right)$$ is surjective when $J\ge 0$, since $S_{K+J}\subset \{(X_1,\dots,X_{K+J});(X_1,\dots, X_K)\in S_K\}$. Therefore is enough to show the lemma for 
$$\pi_n\left(M_1(X_1)M_2(X_2)M_3(X_3)\right).$$ First 
\begin{equation*}
\begin{aligned}
&\pi_n \left(M_1(X_1)M_2(X_2)\right)= \pi_n \left( \left(\begin{matrix} 1 & 0 & \dots & 0 \\ x_{21,1} & \ddots & \ddots & \vdots \\ \vdots & \ddots & \ddots & 0 \\ x_{n1,1} & \dots & x_{n(n-1),1} & 1 \end{matrix}\right) 
\left(\begin{matrix} 1 & x_{12,2} & \dots & x_{1n,2} \\ 0 & \ddots & \ddots & \vdots \\ \vdots & \ddots & \ddots & x_{(n-1)n,2} \\ 0 & \dots & 0 & 1 \end{matrix}\right)
\right) \\ &=\left( x_{n1,1}\; , \  x_{n2,1}+x_{n1,1}x_{12,2}\; , \  \dots \; ,\  x_{n(n-1),1}+\sum_{j=1}^{n-2} x_{nj,1}x_{j(n-1),2} \; ,  1+ \sum_{j=1}^{n-1} x_{nj,1}x_{jn,2}  \right).
\end{aligned}
\end{equation*}
It is clear that we can map onto the set $$ \left \{ (a_1, \dots , a_n) \in \mathbb{C}^n\setminus \{ 0 \}; a_1\neq 0  \right \}.$$ To map onto $\mathbb{C}^n\setminus \{ 0 \}$ we need to use a third matrix. Consider matrices $M_1(X_1)$ and $M_2(X_2)$ such that $$\pi_n(M_1(X_1)M_2(X_2)) = (1,a_2,\dots ,a_n).$$ For such matrices we have \begin{equation*}
\begin{aligned}
\pi_n & \left(M_1(X_1)M_2(X_2)M_3(X_3)\right)= \\ & =
\pi_n \left( 
\left(\begin{matrix} \star & \dots & \dots & \star \\ \vdots & \ddots & \ddots & \vdots \\ \star & \dots & \dots & \star \\ 1 & a_2 & \dots & a_n \end{matrix}\right)
\left(\begin{matrix} 1 & 0 & \dots & 0 \\ x_{21,3} & \ddots & \ddots & \vdots \\ \vdots & \ddots & \ddots & 0 \\ x_{n1,3} & \dots & x_{n(n-1),3} & 1 \end{matrix}\right) 
\right)= \\ &=\left( 1 + \sum_{j=2}^n a_jx_{j1,3}\; ,\  a_2 + \sum_{j=3}^n a_jx_{j2,3}\; ,\  \dots \; ,\  a_n \right).
\end{aligned}
\end{equation*}
We can choose $X_3, a_2, \dots, a_n$ freely to produce any vector in $\mathbb{C}^n\setminus \{ 0 \}$. Note that we cannot produce 0 since this would force $a_2= \dots = a_n=0$.
\end{proof}
We turn to the proof of Lemma \ref{l:submersive}.
\begin{proof}[Proof of Lemma \ref{l:submersive}]
We begin with the base case $K=2$. Let \begin{equation*}
\left( P_{1,1}(Z_1),\dots, P_{n,1}(Z_1) \right)=\pi_n\left( \left(\begin{matrix} 1 & 0 & \dots & 0 \\ z_{21,1} & \ddots & \ddots & \vdots \\ \vdots & \ddots & \ddots & 0 \\ z_{n1,1} & \dots & z_{n(n-1),1} & 1  \end{matrix}\right)^{-1} \right)
\end{equation*}
and 
\begin{equation*}
\begin{aligned}
& \left( P_{1,2}(Z_1,Z_2),\dots, P_{n,2}(Z_1,Z_2) \right)= \\ &=\pi_n\left( \left(\begin{matrix} 1 & 0 & \dots & 0 \\ z_{21,1} & \ddots & \ddots & \vdots \\ \vdots & \ddots & \ddots & 0 \\ z_{n1,1} & \dots & z_{n(n-1),1} & 1  \end{matrix}\right)^{-1} \left(\begin{matrix} 1 & z_{12,2} & \dots & z_{1n,2} \\ 0 & \ddots & \ddots & \vdots \\ \vdots & \ddots & \ddots & z_{(n-1)n,2} \\ 0 & \dots & 0 & 1 \end{matrix}\right)^{-1}  \right).
\end{aligned}
\end{equation*}
We get relations by studying \begin{equation*}
\begin{aligned}
& \left(\begin{matrix} \star & \dots & \star  \\  \vdots & \ddots & \vdots  \\  \star & \dots & \star  \\ P_{1,1}(Z_1) & \dots & P_{n,1}(Z_1) \end{matrix}\right)= \\  &=\left(\begin{matrix} \star & \dots & \star  \\  \vdots & \ddots & \vdots  \\  \star & \dots & \star  \\ P_{1,2}(Z_1,Z_2) & \dots & P_{n,2}(Z_1,Z_2) \end{matrix}\right) \left(\begin{matrix} 1 & z_{12,2} & \dots & z_{1n,2} \\ 0 & \ddots & \ddots & \vdots \\ \vdots & \ddots & \ddots & z_{(n-1)n,2} \\ 0 & \dots & 0 & 1 \end{matrix}\right).
\end{aligned}
\end{equation*}
They are \begin{equation}\label{e:relations}
\begin{aligned}
P_{1,2}(Z_1,Z_2)&=P_{1,1}(Z_1) \\
P_{2,2}(Z_1,Z_2)&=P_{2,1}(Z_1)- z_{12,2}P_{1,2}(Z_1,Z_2) \\
& \ \, \vdots \\
P_{k,2}(Z_1,Z_2)&=P_{k,1}(Z_1) - \sum_{j=1}^{k-1} z_{jk,2}P_{j,2}(Z_1,Z_2) \\
& \ \, \vdots \\
P_{n,2}(Z_1,Z_2)&=P_{n,1}(Z_1) - \sum_{j=1}^{n-1} z_{jn,2}P_{j,2}(Z_1,Z_2).
\end{aligned}
\end{equation}
We need to establish at which points $dP_{1,2}\wedge dP_{2,2} \wedge \dots \wedge dP_{n,2}=0$. Note that $P_{n,1}\equiv 1$. Also note that $dP_{1,1}\wedge \dots \wedge dP_{n-1,1}$ never vanishes  since $dz_{21,1}\wedge \dots \wedge dz_{n1,1}\wedge \dots \wedge dz_{n(n-1),1}$ never vanishes and $P_{1,1},\dots,  P_{n-1,1}$ are components of an automorphism of $\mathbb{C}^{n(n-1)/2}$ induced by $M_1(Z_1)\mapsto M_1(Z_1)^{-1}$. Set $
\Omega_2=dP_{1,2}\wedge \dots \wedge dP_{n,2}$. We will show that this form is zero if and only if $P_{1,2}(Z_1,Z_2)=\dots =P_{n-1,2}(Z_1,Z_2)=0$. Indeed, when we plug in 
 $$ dP_{k,2}(Z_1,Z_2)=dP_{k,1}(Z_1) - \sum_{j=1}^{k-1} P_{j,2}(Z_1,Z_2)\, d z_{jk,2} - \sum_{j=1}^{k-1} z_{jk,2}\, dP_{j,2}(Z_1,Z_2) $$ to calculate $\Omega_2$ we see that the terms in the last sum do not contribute to the result. Next one sees that $\Omega_2$ contains summands of the form $$(P_{k,2})^{n-k}\; dP_{1,1}\wedge \dots \wedge dP_{k,1}\wedge dz_{k(k+1),2}\wedge \dots \wedge dz_{kn,2}$$ and these are the only summands containing the wedge $ dz_{k(k+1),2}\wedge \dots \wedge dz_{kn,2}$. This implies that at points where $\Omega_2$ vanishes we have $$P_{1,2}(Z_1,Z_2)=\dots =P_{n-1,2}(Z_1,Z_2)=0$$ and all other summands involves products of these functions as coefficients. Thus $\Omega_2$ vanishes if and only if $P_{1,2}(Z_1,Z_2)=\dots =P_{n-1,2}(Z_1,Z_2)=0$.

From (\ref{e:relations}) we see that this is equivalent to $P_{1,1}(Z_1)=\dots = P_{n-1,1}(Z_1)=0$. Note that $P_{1,1}, \dots, P_{n-1,1}$ are components of the automorphism of $\mathbb{C}^{n(n-1)/2}$ induced by $M_1(Z_1)\mapsto M_1(Z_1)^{-1}$. Since this automorphism fixes $S_2=\{z_{n1,1}=\dots = z_{n(n-1),1}=0\}$ we conclude that $\Phi_2$ is submersive exactly at points outside $S_2$. In order to make our induction step we need some further properties. Note that at points where $P_{1,2}=\dots = P_{n-1,2}=0$, that is in $S_2$, we have $$dP_{1,2}\wedge \dots \wedge dP_{n-1,2}=dP_{1,1}\wedge \dots \wedge dP_{n-1,1}\neq 0.$$ We also have $dP_{n,1}\equiv 0$ since $P_{n,1}(Z_1)\equiv 1$.

We now consider $K$ odd. Our induction assumptions are besides the description of the non-submersivity set $S_{K-1}$ the following:

In case when $K$ odd and $dP_{1,K-1}\wedge \dots \wedge dP_{n,K-1}=0$ then 
\begin{description}
\item[($I_{K-1}$)]{$dP_{n,K-2}=0$,} 
\item[($II_{K-1}$)]{$dP_{1,K-1}\wedge \dots \wedge dP_{n-1,K-1}\neq 0$, and} 
\item[($III_{K-1}$)]{$P_{j,K-1}=0$ for $1\le j \le n-1$.}
\end{description}
We now describe the induction step from $K-1$ to $K$ when $K$ is odd.
Doing similar calculations as for $K=2$ we get the relations 
\begin{equation}\label{relationsodd}
\begin{aligned}
P_{n,K}(Z_1,\dots,Z_K)&=P_{n,K-1}(Z_1,\dots , Z_{K-1}) \\
P_{n-1,K}(Z_1,\dots,Z_K)&=P_{n-1,K-1}(Z_1,\dots,Z_{K-1})- z_{n(n-1),K}P_{n,K}(Z_1,\dots,Z_{K}) \\
& \ \, \vdots \\
P_{k,K}(Z_1,\dots,Z_K)&=P_{k,K-1}(Z_1,\dots,Z_{K-1}) - \sum_{j=k+1}^{n} z_{jk,K}P_{j,K}(Z_1,\dots,Z_K) \\
& \ \, \vdots \\
P_{1,K}(Z_1,\dots,Z_K)&=P_{1,K-1}(Z_1,\dots,Z_{K-1}) - \sum_{j=2}^{n} z_{j1,K}P_{j,K}(Z_1,\dots,Z_K).
\end{aligned}
\end{equation}
We see that 
\begin{equation*}
\begin{aligned}
\Omega_K&=dP_{n,K}\wedge dP_{n-1,K}\wedge \dots \wedge dP_{1,K}=\\ &=dP_{n,K-1}\wedge \dots \wedge dP_{1,K-1} + \text{ terms involving } dz_{jl,K}.
\end{aligned}
\end{equation*}
At points where $\Omega_K$ vanishes the form $\Omega_{K-1}=dP_{n,K-1}\wedge \dots \wedge dP_{1,K-1}$ must vanish since it involves no terms in $dz_{jl,K}$. By the induction hypotheses this forces $$Z\in \{(Z_1,\dots, Z_K); (Z_1,\dots, Z_{K-1}) \in S_{K-1}\}.$$ 
A calculation shows that at these points $P_{n,K-1}(Z_1,\dots ,Z_{K-1})=P_{n,K}(Z_1,\dots ,Z_{K})=1$. Plugging in 
\begin{equation*}
\begin{aligned} 
dP_{k,K}(Z_1,\dots,Z_K)&=dP_{k,K-1}(Z_1,\dots,Z_{K-1}) - \\ & - \sum_{j=k+1}^{n} P_{j,K}(Z_1,\dots,Z_K)\; d z_{jk,K} - \sum_{j=k+1}^{n} z_{jk,K}\, dP_{j,K}(Z_1,\dots,Z_K)\end{aligned}
\end{equation*}
to calculate $\Omega_K$ we see that the terms in the last sum do not contribute to the result. We also find a term of the following form 
\begin{equation*}
(P_{n,K})^{n-1}\, dP_{n,K}\wedge dz_{n(n-1),K}\wedge \dots \wedge dz_{nk,K} \wedge \dots \wedge dz_{n1,K}.
\end{equation*}
Since $P_{n,K}=1$ at these points we see that we must have $dP_{n,K}=dP_{n,K-1}=0$ in order for  $ \Omega_K$ to vanish and this obviously implies that $\Omega_K$ vanishes.  We have 
$$ P_{n,K-1}=P_{n,K-2} - \sum_{j=1}^{n-1} z_{jn,K-1}P_{j,K-1}$$
and 
$$ dP_{n,K-1}=dP_{n,K-2} - \sum_{j=1}^{n-1} P_{j,K-1}\, dz_{jn,K-1}- \sum_{j=1}^{n-1} z_{jn,K-1}\,dP_{j,K-1}. $$
By the induction assumption at points where $\Omega_{K-1}$ vanishes we have $dP_{n,K-2}=0$  and $P_{j,K-1}=0$ for $1\le j \le n-1$. Therefore  $$ dP_{n,K-1}=- \sum_{j=1}^{n-1} z_{jn,K-1}\,dP_{j,K-1}$$ at these points. Moreover by the induction hypotheses $dP_{1,K-1}\wedge \dots \wedge dP_{n-1,K-1}\neq 0$ meaning that $dP_{1,K-1}, \dots , dP_{n-1,K-1}$ are linearly independent at these points,
which implies $z_{jn,K-1}=0$ for $1\le j \le n-1$. Therefore the mapping is non-submersive exactly in $S_K$. 


To pass from $K$ to $K+1$, that is from odd to even, we like to establish $(IV_{K})$ and $(V_{K})$ below. We have already established $(IV_K)$ above. To prove $(V_K)$ look at
 \begin{equation*}
\begin{aligned}
&dP_{1,K}\wedge \dots \wedge dP_{n-1,K}=\\ &=dP_{1,K-1}\wedge \dots \wedge dP_{n-1,K-1} + \text{ terms involving } dz_{jl,K} \neq 0
\end{aligned}
\end{equation*}
The non-vanishing of the left hand side at points in $S_K$ follows from $(II_{K-1})$. This concludes the induction step from $K-1$ to $K$ for odd $K$. However, let us for future use explicitly state that: \begin{equation}\label{inductiveassumption}
dP_{n,K} \text{ vanishes at all points in }S_K.  
\end{equation}  

Consider the case $K$ even. Our induction assumption are besides the description of the non-submersivity set $S_{K-1}$ the following: 

In case when $K$ is even and $dP_{1,K-1}\wedge \dots \wedge dP_{n,K-1}=0$ then 
\begin{description}
\item[($IV_{K-1}$)]{$dP_{1,K-1}\wedge \dots \wedge dP_{n-1,K-1}\neq 0$ and}
\item[($V_{K-1}$)]{$P_{n,K-1}=1$.}
\end{description}

 We have \begin{equation}\label{e:even}
\begin{aligned}
P_{1,K}&=P_{1,K-1} \\
P_{2,K}&=P_{2,K-1}- z_{12,K}P_{1,K}\\
& \ \, \vdots \\
P_{n,K}&=P_{n,K-1} - \sum_{j=1}^{n-1} z_{jn,K}P_{j,K}.
\end{aligned}
\end{equation}

We see that 
\begin{equation*}
\begin{aligned}
\Omega_K&=dP_{1,K}\wedge \dots \wedge dP_{n,K}=\\ &=dP_{1,K-1}\wedge \dots \wedge dP_{n,K-1} + \text{ terms involving } dz_{jl,K}.
\end{aligned}
\end{equation*}
At points where $\Omega_K$ vanishes the form $\Omega_{K-1}=dP_{1,K-1}\wedge \dots \wedge dP_{n,K-1}$ must vanish since it involves no terms in $dz_{jl,K}$. By the induction hypotheses this forces $$Z\in \widetilde{S}_{K-1}=\{(Z_1,\dots, Z_K); (Z_1,\dots, Z_{K-1}) \in S_{K-1}\}.$$ We will show that $\Omega_K$ vanishes at points in $\widetilde{S}_{K-1}$ if and only if $P_{1,K}(Z_1,\dots,Z_K)=\dots =P_{n-1,K}(Z_1,\dots, Z_{K})=0$. Indeed, when we plug in 
 $$ dP_{k,K}=dP_{k,K-1} - \sum_{j=1}^{k-1} P_{j,K}\, d z_{jk,K} - \sum_{j=1}^{k-1} z_{jk,K}\, dP_{j,K} $$ to calculate $\Omega_K$ we see that the terms in the last sum do not contribute to the result. Next one sees that $\Omega_K$ contains summands of the form $$(P_{k,K})^{n-k}\; dP_{1,K-1}\wedge \dots \wedge dP_{k,K-1}\wedge dz_{k(k+1),K}\wedge \dots \wedge dz_{kn,K}$$ and these are the only summands containing the wedge $ dz_{k(k+1),K}\wedge \dots \wedge dz_{kn,K}$. 
Using $(IV_{K-1})$ we see that for all $1\le k \le n-1$ the wedge $ dP_{1,K-1}\wedge \dots \wedge dP_{k,K-1}$ never vanishes on $\widetilde{S}_{K-1}$. Therefore on $\widetilde{S}_{K-1}$ the vanishing of  $\Omega_K$ implies $P_{1,K}=\dots =P_{n-1,K}=0$. All other summands involves products of these functions as coefficients. Thus $\Omega_K$ vanishes if and only if $Z\in \widetilde{S}_{K-1}$ and $P_{1,K}=\dots =P_{n-1,K}=0$. Inspecting (\ref{e:even}) we see that at these points $P_{j,K}=P_{j,K-1}=0$ for $1\le j \le n-1$ and $P_{n,K}=P_{n,K-1}$. By $(V_{K-1})$ we have $P_{n,K}=P_{n,K-1}=1$. 

Going back one step to $K-1$ we have  \begin{equation*}
\begin{aligned}
P_{n,K-1}&=P_{n,K-2} \\
P_{n-1,K-1}&=P_{n-1,K-2}- z_{n(n-1),K-1}P_{n,K-1} \\
& \ \, \vdots \\
P_{1,K-1}&=P_{1,K-2} - \sum_{j=2}^{n} z_{j1,K-1}P_{j,K-1}
\end{aligned}
\end{equation*}
and since $P_{j,K-2}=0$ for $1 \le j \le n-1$ at points in $\widetilde{S}_{K-2}\supset \widetilde{S}_{K-1}$ we see that we must have  $$P_{j,K}=P_{j,K-1}=-z_{nj,K-1}=0$$ for $1\le j \le n-1$ at the points we are considering. This implies that the mapping is submersive exactly at points outside $S_K$. 

Finally we need justify the induction assumptions $(I_K)$, $(II_K)$, and $(III_K)$. 
We have already established $(III_K)$. We also see that  \begin{equation*}
\begin{aligned}
&dP_{1,K}\wedge \dots \wedge dP_{n-1,K}=\\ &=dP_{1,K-1}\wedge \dots \wedge dP_{n-1,K-1} + \text{ terms involving } dz_{jl,K}
\end{aligned}
\end{equation*}
and the first term is non-vanishing on $\widetilde{S}_{K-1}$ by $(IV_{K-1})$ (and thus on $S_K$). This establishes $(II_K)$. By (\ref{inductiveassumption}) we know that $dP_{n,K-1}$ vanishes identically on $\widetilde{S}_{K-1}$ and therefore on $S_K$ and this is $(I_K)$.
This concludes our induction step and the lemma follows.
\end{proof}

\begin{Rem}\label{r:smoothness}
Let us make some observations about the sets $S_K$ and how they are situated in relation to the fibers of the mapping. First when $K$ is even one sees that the image of $S_K$ using the map $\Phi_K\colon \left(\mathbb{C}^{n(n-1)/2} \right)^K \to \mathbb{C}^n\setminus \{ 0 \}$ is $(0,\dots, 0 , 1)$ so the points in $S_K$ are all contained in $\Phi^{-1}(\{(0,\dots, 0,1\})$. When $K$ is odd the image of $S_K$ is $\{(z_1,\dots,z_n)\in \mathbb{C}^n\setminus \{ 0 \}; z_n=1\}$.
\end{Rem}

\section{Proof of Lemma \ref{l:spray}}
\label{technical2}

\begin{Def}
We say that a polynomial $p(x_1,\dots,x_n)\in \mathbb{C}[\mathbb{C}^n]$ is no more than linear in $x_k$ if there exists two polynomials $\widetilde{p}, \widetilde{q}\in \mathbb{C}[\mathbb{C}^n]$ both independent of $x_k$ such that $$ p = x_k\widetilde{p}+\widetilde{q}.$$ 
\end{Def}

We need the following two lemmata.

\begin{Lem}\label{l:integrable}
Let $p(x_1,\dots,x_n)\in \mathbb{C}[\mathbb{C}^n]$ be a polynomial which is no more than linear in each variable. Then the vector fields $$V_{ij,p}=\frac{\partial p}{\partial x_i}\frac{\partial}{\partial x_j}- \frac{\partial p}{\partial x_j}\frac{\partial}{\partial x_i} $$ for $1\le i < j \le n$ are globally integrable on $\mathbb{C}^n$.
\end{Lem}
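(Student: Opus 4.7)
My plan is to exploit the fact that $V_{ij,p}$ has no component in the $\partial/\partial x_k$ direction for $k\ne i,j$, so the coordinates $x_k$ with $k\ne i,j$ are first integrals and stay constant along every integral curve. This reduces the question of global integrability of $V_{ij,p}$ on $\mathbb{C}^n$ to a planar flow problem on each slice obtained by fixing these coordinates.

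Next I would use the linearity-in-each-variable hypothesis to write
\[ p = A\,x_ix_j + B\,x_i + C\,x_j + D, \]
where $A,B,C,D\in \mathbb{C}[x_k : k\ne i,j]$. Differentiating gives
$\partial p/\partial x_i = A\,x_j + B$ and $\partial p/\partial x_j = A\,x_i + C$,
so on the slice $\{x_k = c_k : k\ne i,j\}$ the flow of $V_{ij,p}$ is governed by the system
\[ \dot x_i = -A\,x_i - C, \qquad \dot x_j = A\,x_j + B, \]
where $A$, $B$, $C$ are now the \emph{constants} obtained by evaluating the polynomials at the fixed $c_k$. The key point is that the hypothesis is exactly what makes these coefficients independent of $x_i$ and $x_j$: had $p$ contained a term such as $x_i^2$, the coefficient $\partial p/\partial x_i$ would depend on $x_i$ and the reduction would fail.

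I would finish by observing that a scalar inhomogeneous linear ODE with constant coefficients admits a solution which is entire in $t\in\mathbb{C}$ (a shifted exponential when $A\ne 0$, an affine function of $t$ when $A=0$). Applying this to each of the two decoupled equations above, the integral curve through any base point extends to all complex times, and patching these flows over all slices yields a globally defined holomorphic flow $\varphi^t$ of $V_{ij,p}$ on $\mathbb{C}^n$ for every $t\in\mathbb{C}$. I do not anticipate any real obstacle; the argument is essentially the observation that a Hamiltonian in two variables which is at most bilinear generates a linear flow.
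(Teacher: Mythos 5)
Your proof is correct, and the underlying idea is the same as the paper's: the linearity hypothesis forces the $x_i$ and $x_j$ evolutions to decouple, and all other coordinates are conserved, so the flow reduces to a pair of scalar linear ODEs with entire solutions. The paper phrases this more tersely by splitting $V_{ij,p}$ into the two summands $\frac{\partial p}{\partial x_i}\frac{\partial}{\partial x_j}$ and $\frac{\partial p}{\partial x_j}\frac{\partial}{\partial x_i}$ and observing each is complete; the unstated step there is that these two fields commute (the first is independent of $x_i$, the second of $x_j$, and the coefficients $A,B,C,D$ are independent of both), so the flow of their difference is the composition of the two flows and hence also complete. Your version, writing $p=Ax_ix_j+Bx_i+Cx_j+D$ and integrating the decoupled system $\dot x_i=-(Ax_i+C)$, $\dot x_j=Ax_j+B$ explicitly, makes the decoupling visible and avoids invoking the commuting-fields fact; it is a little longer but more self-contained. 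Both arguments are sound and exploit the hypothesis in exactly the same place — this is precisely what breaks down if $p$ contains a term like $x_i^2$.
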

\begin{proof}
Note that $\partial p/\partial x_i$ is no more than linear in $x_j$ and independent of $x_i$ since $p$ is no more than linear in each variable separately. Hence the vector field $\partial p/\partial x_i (\partial / \partial x_j)$ is globally integrable and independent of $x_i$. Similarly the vector field $\partial p/\partial x_j (\partial / \partial x_i)$ is globally integrable and independent of $x_j$. Therefore the vector fields $V_{ij,p}$ are globally integrable.
\end{proof}
\begin{Lem}\label{l:span}
Let $p(x_1,\dots,x_n)\in \mathbb{C}[\mathbb{C}^n]$ and $$F_p(c)=\left\{X=(x_1,\dots,x_n)\in \mathbb{C}^n; p(X)=c \right\} $$ be the fiber of $p$ over the value $c$.  Then the vector fields $$V_{ij,p}=\frac{\partial p}{\partial x_i}\frac{\partial}{\partial x_j}- \frac{\partial p}{\partial x_j}\frac{\partial}{\partial x_i}, \ 1\le i < j \le n,$$ span the tangent space of $F_p(c)$ at all smooth points $X\in F_p(c)$ (i.e. those points where $dp$ does not vanish.)
\end{Lem}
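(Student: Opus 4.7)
The plan is to verify two things: first, that each $V_{ij,p}$ is everywhere tangent to the level sets of $p$, and second, that at any smooth point $X\in F_p(c)$ one can extract $n-1$ of these vector fields whose values at $X$ are linearly independent. Since at a smooth point the tangent space $T_XF_p(c)=\ker dp(X)$ has dimension $n-1$, exhibiting $n-1$ linearly independent tangent vectors among the $V_{ij,p}(X)$ is enough.

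Tangency is immediate: a direct computation gives
$$V_{ij,p}(p)=\frac{\partial p}{\partial x_i}\frac{\partial p}{\partial x_j}-\frac{\partial p}{\partial x_j}\frac{\partial p}{\partial x_i}=0,$$
so each $V_{ij,p}$ is tangent to every fiber of $p$, and in particular $V_{ij,p}(X)\in T_XF_p(c)$ for $X\in F_p(c)$.

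For the spanning, fix a smooth point $X\in F_p(c)$. By smoothness, $dp(X)\ne 0$, so there exists an index $k$ with $\partial p/\partial x_k(X)\ne 0$. Consider the $n-1$ vector fields indexed by the remaining coordinates: take $V_{ik,p}$ for each $i<k$ and $V_{kj,p}$ for each $j>k$. For $l\ne k$, the vector field associated with index $l$ (namely $V_{lk,p}$ if $l<k$ or $V_{kl,p}$ if $l>k$) has the form
$$\pm\Bigl(\frac{\partial p}{\partial x_l}(X)\frac{\partial}{\partial x_k}-\frac{\partial p}{\partial x_k}(X)\frac{\partial}{\partial x_l}\Bigr),$$
so its component in the $\partial/\partial x_l$ direction is $\mp\,\partial p/\partial x_k(X)\ne 0$, while its only other nonzero component lies in the $\partial/\partial x_k$ direction. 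Collecting these $n-1$ vectors and writing their coordinates with respect to the basis $\{\partial/\partial x_l:l\ne k\}\cup\{\partial/\partial x_k\}$, the $(n-1)\times n$ coefficient matrix has, in its leftmost $(n-1)\times(n-1)$ block, the scalar matrix $\mp\,\partial p/\partial x_k(X)\cdot I_{n-1}$. This block is invertible, so the $n-1$ vectors are linearly independent at $X$, and the lemma follows.

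I do not expect any genuine obstacle here; the only point that requires care is the bookkeeping in choosing the correct $n-1$ indices (i.e.\ $V_{ik,p}$ with $i<k$ together with $V_{kj,p}$ with $j>k$, respecting the convention $i<j$ in the definition of $V_{ij,p}$), and checking that the coefficient matrix is indeed block-diagonalizable in the described way. The polynomial hypothesis and the linearity-in-each-variable condition from Lemma \ref{l:integrable} play no role in this lemma; only the definition of the $V_{ij,p}$ matters.
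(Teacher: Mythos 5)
Your proof is correct and follows essentially the same approach as the paper: tangency by direct computation, then at a smooth point with $\partial p/\partial x_k\ne 0$ show the $n-1$ fields $V_{lk,p}$ (resp.\ $V_{kl,p}$) are independent because the relevant $(n-1)\times(n-1)$ block is diagonal with nonvanishing entries. One cosmetic note: that block is a diagonal matrix whose entries are $\pm\partial p/\partial x_k(X)$ with sign depending on whether $l<k$ or $l>k$, not literally a scalar multiple of the identity, but this has no bearing on invertibility.
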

\begin{proof}
We have $$V_{ij,p}(p-c)= \frac{\partial p}{\partial x_i}\frac{\partial p}{\partial x_j}- \frac{\partial p}{\partial x_j}\frac{\partial p}{\partial x_i}\equiv 0$$ so the vector fields are tangential to $F_p(c)$. We need to show that $$\dim  \text{span} \left ( V_{ij,p} ; 1\le i < j \le n \right )=n-1.$$ But this is obvious since at points where $dp\neq 0$ one component, say $\partial p / \partial x_n$, is non-zero and then $$ \dim \text{span} \left( V_{in,p} ; 1\le i \le n-1 \right ) = n-1.$$
\end{proof}

\begin{proof}[Proof of Lemma \ref{l:spray}]
In order to construct a spray we will produce globally integrable vector fields that span the tangent spaces of the fibers of $\Phi_K$. These fibers are given by $n$ polynomial equations in $Kn(n-1)/2$ variables. It is difficult to produce globally integrable vector fields that leave these polynomials invariant. The main goal of our proof will be to reduce, on each stratum individually, these polynomial equations to essentially a single polynomial equation. This polynomial equation will be no more than linear in each variable and therefore Lemma \ref{l:integrable} and Lemma \ref{l:span} will provide us with the desired integrable fields.. 

Recall that we have the relations 
\begin{equation}\label{e:evencase}
\begin{aligned}
P_{1,K}&=P_{1,K-1} \\
P_{2,K}&=P_{2,K-1} - z_{12,K}P_{1,K} \\
& \ \, \vdots \\
P_{k,K}&=P_{k,K-1} - \sum_{j=1}^{k-1} z_{jk,K}P_{j,K} \\
& \ \, \vdots \\
P_{n,K}&=P_{n,K-1} - \sum_{j=1}^{n-1} z_{jn,K}P_{j,K}
\end{aligned}
\end{equation}
when $K$ is even and 
\begin{equation}\label{e:oddcase}
\begin{aligned}
P_{n,K}&=P_{n,K-1} \\
P_{n-1,K}&=P_{n-1,K-1}- z_{n(n-1),K}P_{n,K} \\
& \ \, \vdots \\
P_{k,K}&=P_{k,K-1} - \sum_{j=k+1}^{n} z_{jk,K}P_{j,K} \\
& \ \, \vdots \\
P_{1,K}&=P_{1,K-1} - \sum_{j=2}^{n} z_{j1,K}P_{j,K}
\end{aligned}
\end{equation} 
when $K$ is odd. 

Lets make the following 

\medskip
{\bf Observation ($\star$):}

From (\ref{e:evencase}) and (\ref{e:oddcase}) one easily deduce by induction that the map $\Phi_K=(P_{1,K},\dots,P_{n,K})$ has polynomial entries that are no more than linear in each variable. Using (\ref{e:evencase}) one sees that $P_{1,K}$ is independent of $Z_K$, $P_{2,K}$ depends only on $Z_1,\dots, Z_{K-1}, z_{12,K}$, and in general $P_{k,K}$ depends only on $Z_1,\dots, Z_{K-1}$ and $z_{ij,K}$ for $1\le i < j \le k$ when $K$ is even. When $K$ is odd one conclude, using (\ref{e:oddcase}), that $P_{k,K}$ depends only on $Z_1,\dots, Z_{K-1}$ and $z_{ij,K}$ for $k\le j < i \le n$.

\medskip

We will begin by considering the case $K$ even: 

Here we will stratify $\mathbb{C}^n\setminus \{ 0 \}$ as 
\begin{itemize}
\item{$V_n=\mathbb{C}^n\setminus \{ 0 \}$,} 
\item{$V_{n-k}=\{ (z_1,\dots, z_n)\in \mathbb{C}^n\setminus \{ 0 \};z_1=\dots = z_k=0\} \text{ when } 1\le k \le n-1$,  and} 
\item{$V_0=\emptyset$.}
\end{itemize} 
First consider a fiber over a point  $a=(a_1,\dots,a_n)\in V_n\setminus V_{n-1}$. Here we have \begin{equation*}
\begin{aligned}
P_{1,K}&= a_1\neq 0 \\
P_{2,K}&=P_{2,K-1} - z_{12,K}P_{1,K} = a_2 \\
& \ \, \vdots \\
P_{n,K}&=P_{n,K-1} - \sum_{j=1}^{n-1} z_{jn,K}P_{j,K} = a_n.
\end{aligned}
\end{equation*}
Put $Z_K=(Z'_K,Z''_K)$ where $Z''_K=(z_{12,K},\dots,z_{1n,K})$ and $Z'_K$ consists of the other variables in $Z_K$.  
We see that the fiber $\Phi_K^{-1}(a)$ is biholomorphic to (it is a graph over) 

$$B_n(a)=\{Z=(Z_1,\dots,Z'_K)\in (\mathbb{C}^{n(n-1)/2})^{K-1}\timesÊ\mathbb{C}^{(n-1)(n-2)/2};  P_{1,K}(Z)=a_1\}$$ (remember that $P_{1,K}$ is independent of $Z_K$) since 
\begin{equation*}
\begin{aligned}
z_{12,K}&= \frac{P_{2,K-1}-a_2}{a_1} \\
& \ \, \vdots \\
z_{1n,K}&=\frac{P_{n,K-1}-a_n - \sum_{j=2}^{n-1} z_{jn,K}a_j}{a_1}.
\end{aligned}
\end{equation*}
This also shows that the fibration $\Phi_K\colon \Phi_K^{-1}(V_n\setminus V_{n-1}) \to V_n\setminus V_{n-1}$ is biholomorphic to the fibration $$\xymatrix{ \{ (Z_1,\dots,Z'_K,a)\in \mathbb{C}^{M}\times (V_n\setminus V_{n-1}); P_{1,K}(Z)=a_1 \} \ar[d]^{(Z_1,\dots, Z'_K,a) \mapsto a} \\   V_n\setminus V_{n-1}}$$ 
where $\mathbb{C}^M= (\mathbb{C}^{n(n-1)/2})^{K-1}\timesÊ\mathbb{C}^{(n-1)(n-2)/2}$. The vector fields $V_{ij,P_{1,K}}$ where $i,j$ run through all pairs of variables in $\mathbb{C}^M$ are globally integrable and span the tangent space of each individual fiber by Lemma \ref{l:integrable} and \ref{l:span} (since the fibers of $\Phi_K$ over $V_n\setminus V_{n-1}$ are smooth and biholomorphic to $\{P_{1,K}(Z)=a_1\}$, see Remark \ref{r:smoothness}). This gives us the vector fields needed to conclude that the restricted submersion over $V_n\setminus V_{n-1}$ admits a spray. 

Next lets study the fiber over a point $a=(0,a_2,\dots, a_n) \in V_{n-1}\setminus V_{n-2}$. Here the relations for the fiber are 
\begin{equation*}
\begin{aligned}
P_{1,K}&=P_{1,K-1}=  0 \\
P_{2,K}&=P_{2,K-1} - z_{12,K}P_{1,K} = a_2 \neq 0 \\
& \ \, \vdots \\
P_{n,K}&=P_{n,K-1} - \sum_{j=1}^{n-1} z_{jn,K}P_{j,K} = a_n.
\end{aligned}
\end{equation*}
Since $P_{1,K}=0$ the system is equivalent to  
\begin{equation*}
\begin{aligned}
P_{1,K}&=P_{1,K-1}=  0 \\
P_{2,K}&=P_{2,K-1}= a_2 \neq 0 \\
& \ \, \vdots \\
P_{n,K}&=P_{n,K-1} - \sum_{j=2}^{n-1} z_{jn,K}P_{j,K} = a_n.
\end{aligned}
\end{equation*}
and $z_{12,K},\dots,z_{1n,K}$ are free variables. As in the case above we can (using $a_2\neq 0$) solve the last $n-2$ equations for the variables $z_{23,K}, \dots , z_{2n,K}$, namely 
\begin{equation*}
\begin{aligned}
z_{23,K}&= \frac{P_{3,K-1}-a_3}{a_2} \\
& \ \, \vdots \\
z_{2n,K}&=\frac{P_{n,K-1}-a_n - \sum_{j=3}^{n-1} z_{jn,K}a_j}{a_2}.
\end{aligned}
\end{equation*}
Put $Z_K=(Z'_K,Z''_K)$ where $Z''_K=(z_{12,K},\dots,z_{1n,K},z_{23,K},\dots,z_{2n,K})$ and $Z'_K$ consists of the other variables in $Z_K$. We see that the fiber $\Phi_K^{-1}(a)$ is biholomorphic to
\begin{equation*}
\begin{aligned}
B_{n-1}(a)=\{Z=(Z_1,\dots,Z'_K)\in (\mathbb{C}^{n(n-1)/2})^{K-1}\timesÊ\mathbb{C}^{(n-2)(n-3)/2}; \\ P_{1,K}(Z)=0, P_{2,K}(Z)=a_2\}\times  \mathbb{C}^{n-1}_{(z_{12,K},\dots,z_{1n,K})}.
\end{aligned}
\end{equation*}
This system of 2 equations can be reduced to one equation by going back one step and using the last equation of (\ref{e:oddcase}) which says 
\begin{equation*}
P_{1,K-1}=\left(P_{1,K-2} - \sum_{j=3}^nz_{j1,K-1}P_{j,K-1} \right) - z_{21,K-1}P_{2,K-1}=0.
\end{equation*}
It allows us to solve for $z_{21,K-1}$: 
\begin{equation*}
z_{21,K-1}= \frac{\left(P_{1,K-2} - \sum_{j=3}^nz_{j1,K-1}P_{j,K-1} \right)}{a_2}
\end{equation*}
From Observation ($\star$) we see that $$ \left(P_{1,K-2} - \sum_{j=3}^nz_{j1,K-1}P_{j,K-1} \right) $$ does not depend on $z_{21,K-1}$. Putting 
\begin{equation*}
\mathbb{C}^M= (\mathbb{C}^{n(n-1)/2})^{K-2}\times \mathbb{C}^{n(n-1)/2-1} \timesÊ\mathbb{C}^{(n-2)(n-3)/2}
\end{equation*}
and
\begin{equation*}
X=(Z_1,\dots,Z_{K-2},\dots, \widehat{z}_{21,K-1},\dots,Z'_{K})\in \mathbb{C}^M
\end{equation*}
we have shown that the fibration $\Phi_K\colon \Phi_K^{-1}(V_{n-1}\setminus V_{n-2}) \to V_{n-1}\setminus V_{n-2}$ is biholomorphic to the fibration $$\xymatrix{ \{ (X,a)\in \mathbb{C}^{M}\times (V_{n-1}\setminus V_{n-2}); P_{2,K-1}(X)=a_2 \}\times \mathbb{C}^{n-1}_{(z_{12,K},\dots, z_{1n,K})} \ar[d]^{(X,z_{12,K},\dots,z_{1n,K},a) \mapsto a} \\   V_{n-1}\setminus V_{n-2}}$$ 
and the spray is constructed as above.

For general $k$ we proceed analogously. Using $a_1=\dots =a_{k-1}=0$ we get free variables $z_{ij,K}$ for $1\le i \le k-1$ and $i<j \le n$ and the first $k$ equations become $$P_{1,K-1}=\dots =P_{k-1,K-1}=0$$ and $$P_{k,K-1}=a_k.$$ Using $a_k\neq 0$ we solve the last $n-k$ equations for the variables $z_{k(k+1),K},\dots, z_{kn,K}$. Then we go one step back and use the last $k-1$ equations of (\ref{e:oddcase}) to rewrite our first $k-1$ equations. This allow us to solve for the variables $z_{k1,K-1},z_{k2,K-1},\dots, z_{k(k-1),K-1}$. Also the variables $z_{ij,K-1}$, for $1\le j \le k-2$ and $j< i \le k-1$, become free variables. This shows that the fibration  $\Phi_K\colon \Phi_K^{-1}(V_{n-k+1}\setminus V_{n-k}) \to V_{n-k+1}\setminus V_{n-k}$ is biholomorphic to the fibration $$\xymatrix{ \{ (X,a)\in \mathbb{C}^{M}\times (V_{n-k+1}\setminus V_{n-k}); P_{k,K-1}(X)=a_k \}\times \mathbb{C}^{N}_w \ar[d]^{(X,w,a) \mapsto a} \\   V_{n-k+1}\setminus V_{n-k}}$$ 
for appropriate $N$. The globally integrable vector fields $V_{ij,P_{k,K}}$ where $i,j$ run through all pairs of variables in $\mathbb{C}^M$ together the fields $\partial/\partial w_l$ where $l$ runs over all variables in $\mathbb{C}^N_w$ give the spray on the (smooth part, i.e., when $S_K$ is removed, of the) fibers over $ V_{n-k+1}\setminus V_{n-k}$. Remembering Remark (\ref{r:smoothness}) we have smooth fibers over all strata except for the very last stratum where we have a non-smooth fiber over $(0,\dots,0,1)$.

When $K$ is odd the method is basically the same. Only here the stratification is $V_n=\mathbb{C}^n\setminus \{ 0 \}$, $V_{n-k}=\{(z_1,\dots,z_n)\in \mathbb{C}^n\setminus \{ 0 \}; z_{n-k+1}=\dots=z_n=0\}$ when $1 \le k \le n-1$, and $V_0=\emptyset$. On $V_{n-k+1}\setminus V_{n-k}$ one show that the fibers are biholomorphic to $\{P_{n-k+1,K-1}=a_{n-k+1}\}$ times free variables. Note that the singular fibers are contained in the first stratum $V_n\setminus V_{n-1}$, see Remark \ref{r:smoothness}. 
\end{proof}

\begin{Rem}
We do not know whether there is a possibly finer stratification so that the restricted submersions are locally trivial fiber bundles. In some cases we see from the explicit form of the polynomials that we have local triviality. We have not been able to decide this in all cases.
\end{Rem}

\section{On the number of factors}

A natural question to ask is how the number of factors needed in the factorization depends
on the space $X$ and the map $f$. In the algebraic setting there is no such uniform bound as proved by \textsc{van der Kallen} in \cite{vanderKallenSL3BWL}. However in the holomorphic setting (exactly as in the topological setting) it is easy to see that there is an upper
bound depending only on the dimension  of the space $X$ ($= m$) and the size of the matrix
($=n$). 

It follows from \textsc{Vaserstein's} result  (theorem \ref{Vaserstein}) that there exists a 
uniform bound $K$ depending on the dimension of the space $X$ ($= m$) and the size of the matrix
$=n$ such that the fibration 
$$\xymatrix{ p^{\star}Y \ar[d]^{p^{\star}\Phi_{K}} \\   X}$$ 
from the proof of proposition \ref{p:mainprop} has a topological section and hence a holomorphic section. Going through the induction over the size of the matrix as in the proof of Theorem \ref{t:mainthmrestate}
we conclude that there is a uniform bound even in the holomorphic case. 

Another way to prove the existence of such a uniform bound is the following. Suppose it would not exist, i.e., for all natural numbers $i$ there are Stein spaces $X_i$ of dimension $m$ and holomorphic maps $f_i \colon X_i\to \mbox{SL}_n(\mathbb{C})$ such that $f_i$ does not factor over a product of less than $i$ unipotent matrices. Set $X = \cup_{i=1}^\infty X_i$ the disjoint union of the spaces $X_i$ and
$F\colon X\to \mbox{SL}_n(\mathbb{C})$ the map that is equal to $f_i$ on $X_i$. By our main result
$F$ factors over a finite number of unipotent matrices. Consequently all $f_i$ factor over the same number of unipotent matrices which contradicts the assumption on $f_i$.

Thus we proved 

\begin{Thm}
There is a natural number $K$ such that for
any   reduced Stein space $X$ of dimension $m$ and any null-homotopic holomorphic mapping $f\colon X\to \mbox{SL}_n(\mathbb{C})$  there exist holomorphic mappings $G_1,\dots, G_{K}\colon X\to \mathbb{C}^{n(n-1)/2}$ such that $$f(x)=M_{1}(G_1(x))\dots M_{K}(G_{K}(x)).$$
\end{Thm}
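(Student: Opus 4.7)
The plan is to establish the uniform bound by a contradiction argument leveraging the Main Theorem (Theorem \ref{t:mainthmrestate}) itself. If no uniform bound $K = K(m,n)$ existed, then for each $i \in \mathbb{N}$ one could select a reduced Stein space $X_i$ of dimension $m$ together with a null-homotopic holomorphic map $f_i \colon X_i \to \mbox{SL}_n(\mathbb{C})$ that does not factor through fewer than $i$ unipotent matrices. I would then assemble these putative counterexamples into a single object by setting $X = \bigsqcup_{i=1}^\infty X_i$ and defining $F \colon X \to \mbox{SL}_n(\mathbb{C})$ component-wise by $F|_{X_i} = f_i$.

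The routine checks I need are that (a) a countable disjoint union of reduced Stein spaces of dimension $\le m$ is again a reduced Stein space of dimension $\le m$, (b) $F$ is holomorphic component-wise and hence holomorphic, and (c) $F$ is null-homotopic, because one can glue the individual null-homotopies of the $f_i$ into a single continuous homotopy of $F$ on the disjoint union. Granting these, the Main Theorem applied to $F$ yields a natural number $K$ and holomorphic maps $G_1,\dots,G_K \colon X \to \mathbb{C}^{n(n-1)/2}$ with $F(x) = M_1(G_1(x))\cdots M_K(G_K(x))$. Restricting to any $X_i$ with $i > K$ produces a factorization of $f_i$ into at most $K$ unipotent factors, contradicting the choice of $f_i$. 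Hence the required uniform $K(m,n)$ must exist.

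As an alternative, more quantitative route, one could instead track the factor count through the proofs of Proposition \ref{p:mainprop} and Theorem \ref{t:mainthmrestate} directly: \textsc{Vaserstein}'s Theorem \ref{Vaserstein} already supplies a bound $K_{\text{top}}(m,n)$ in the continuous category depending only on $m$ and $n$, the \textsc{Oka--Grauert--Gromov} principle (Theorem \ref{t:forstneric}) converts the resulting continuous section of the pullback fibration into a holomorphic section without enlarging the factor count, and the induction on $n$ in the proof of Theorem \ref{t:mainthmrestate} adds a bounded number of further factors at each of the $n-1$ reduction steps. The main obstacle in the short approach is simply verifying that $X = \bigsqcup X_i$ is a legitimate object to which the Main Theorem applies; in the quantitative approach the main obstacle is the bookkeeping needed to show that each induction step increases the factor count by an amount depending only on $m$ and $n$.
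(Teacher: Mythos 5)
Your primary argument—assembling putative counterexamples $f_i\colon X_i\to \mbox{SL}_n(\mathbb{C})$ on the disjoint union $X=\bigsqcup_i X_i$, applying the Main Theorem to the resulting $F$, and deriving a contradiction—is precisely the second of the two arguments the paper gives for this statement, and your ``alternative, more quantitative route'' via \textsc{Vaserstein}'s uniform bound plus the section and induction steps is the paper's first argument. Both are correct, and your routine checks (that a countable disjoint union of reduced Stein spaces of dimension $m$ is again such, that $F$ is holomorphic, and that the component null-homotopies glue) are exactly the points one needs and hold without difficulty.
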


Let us denote by   $K_{\mathcal C}(m,n)$ the number of matrices needed to factorize any null-homotopic map from a Stein space of dimension $m$ into $\mbox{SL}_n(\mathbb{C})$ by continuous triangular matrices and the number needed in the holomorphic case by $K_{\mathcal O}(m,n)$. We do not know these numbers. We know that the Cohn example can be factored as 4 matrices with continuous entries but if one wants to factor it using matrices with holomorphic entries one needs 5 matrices.  It is natural to ask the following question:

\begin{Prob}
How are the numbers $K_{\mathcal C}(m,n)$ and $K_{\mathcal O}(m,n)$ exactly related? Obviously  $K_{\mathcal C}(m,n) \le K_{\mathcal O}(m,n)$.
\end{Prob}

Examining our proof in the case $n=2$ one easily deduces the estimate  $K_{\mathcal O}(m,2) \le  K_{\mathcal C}(m,2) + 4$. At least for the case $n=2$ we believe the answer to the above  question can be found.


\end{document}